\theoremstyle{plain}
\theoremstyle{plain}
\newtheorem{theorem}{Theorem}[section]
\newtheorem{lemma}[theorem]{Lemma}
\theoremstyle{definition}
\newtheorem{defin}[theorem]{Definition}
\newtheorem{remark}[theorem]{Remark}
\newtheorem{example}{Example}
\theoremstyle{remark}
\numberwithin{equation}{section}
\def\dis{\displaystyle}
\def\supp{\text{\text{supp}}}
\DeclareMathOperator{\R}{\mathbb{R}}
\newcommand{\car}[1]{\raise1pt\hbox{$\chi$}_{#1}}
\newcommand{\DM }{\mathcal{DM}^\infty }
\begin{document}

\title[...]{Regularizing effect of absorption terms in singular problems}

\author{Francescantonio Oliva}
\address{Francescantonio Oliva\\ Dipartimento di Scienze di Base e Applicate per l' Ingegneria, Sapienza Universit\`a di Roma\\Via Scarpa 16, 00161 Roma, Italy}
\email{francesco.oliva@sbai.uniroma1.it}

\keywords{Semilinear elliptic equations, Singular elliptic equations, Regularizing effects, Regularizing terms, 1-Laplacian} 
\subjclass[2010]{35J60, 35J75, 34B16, 35R99, 35A02}

%\date{..., ...}

\begin{abstract}
	
	\noindent We prove existence of solutions to problems whose model is  
	$$\begin{cases}
	\displaystyle   -\Delta_p u + u^q = \frac{f}{u^\gamma} & \text{in}\ \Omega,
	\\
	u\ge0 &\text{in}\ \Omega,
	\\
	u=0 &\text{on}\ \partial\Omega,
	\end{cases}$$
	where $\Omega$ is an open bounded subset of $\mathbb{R}^N$ ($N\ge2$), $\Delta_p u$ is the $p$-laplacian operator for $1\le p <N$, $q>0$, $\gamma\ge 0$ and $f$ is a nonnegative function in $L^m(\Omega)$ for some $m\ge1$. In particular we analyze the regularizing effect produced by the absorption term in order to infer the existence of finite energy solutions in case $\gamma\le 1$. We also study uniqueness of these solutions as well as examples which show the optimality of the results. Finally, we find local $W^{1,p}$-solutions in case $\gamma>1$.
\end{abstract}
\maketitle
\tableofcontents
\section{Introduction}
 The aim of this work is the study of the following boundary value problem 
  \begin{equation}
  \begin{cases}
  \displaystyle   -\Delta_p u + g(u) = h(u)f & \text{in}\ \Omega,\\
  u\ge 0  & \text{in}\ \Omega,\\
  u=0 &\text{on}\ \partial\Omega,
  \end{cases}
  \label{pbintro}
  \end{equation}
  where, for $1\le p < N$, the $p$-laplacian operator is $\Delta_p u:= \operatorname{div}(|\nabla u |^{p-2}\nabla u)$. Here $\Omega\subset\mathbb{R}^N \ (N\ge2)$ is open and bounded (with Lipschitz boundary if $p=1$), $f$ is nonnegative and it belongs to $L^m(\Omega)$ for some $m\ge1$ while  $g(s)$ is continuous, $g(0)=0$ and, as $s\to\infty$, could act as $s^q$ with $q\ge -1$.
  Finally $h$ is continuous, it possibly blows up at the origin and it is bounded at infinity. We should think to $h(s)$ as a non-monotone function which grows at most as $s^{-\gamma}$ near zero and as $s^{-\theta}$ at infinity with $\gamma,\theta\ge 0$. We highlight that the case of continuous, bounded and non-monotone functions $g,h$ is covered by the above assumptions.\\ Our main goal is the existence of finite energy solutions to \eqref{pbintro} (i.e. $u\in W^{1,p}_0(\Omega)$ if $p>1$ and $u\in BV(\Omega)$ if $p=1$); in particular we are interested in understanding the role of the absorption term $g$ in order to produce a regularizing effect in terms of Sobolev regularity of the solutions to \eqref{pbintro} in presence of a possibly singular $h$ as well as the regularizing effect given by $h$ itself when it goes to zero fast enough at infinity.
  \\  
  \\Problem \eqref{pbintro} when $p>1$ and $g\equiv 0$ has been widely studied; if $p=2$, $h(s) = s^{-\gamma}$ ($\gamma>0$) and $f$ is a regular function, existence of classical solutions comes from \cite{crt, lm, s}. Only later, in \cite{bo}, the authors prove existence of a distributional solution in case of a Lebesgue datum $f$ and remarked the regularizing effect given by the right hand side of \eqref{pbintro} when, once again, $p=2$ and $h(s) = s^{-\gamma}$ ($\gamma>0$): namely the solution always lies in a smaller Sobolev space when $0<\gamma\le 1$ compared to the one of case $\gamma=0$. Moreover if $\gamma=1$ the solution is always in $H^1_0(\Omega)$ even if $f$ is just an $L^1$-function as one can formally deduce by taking $u$ itself as test function in \eqref{pbintro} while if $\gamma>1$ the solution belongs only locally to $H^1(\Omega)$ and the boundary datum is given as a suitable power of the solution having zero Sobolev trace. Then, in \cite{op2}, a general $h$ (as above) is considered and various results are proved depending on $\gamma$, $\theta$, and $f$ in order to have finite energy solutions. Among other things, one of the results concerning finite energy solutions can be summarized as follows: let $0\le \gamma\le 1$ then $u \in H^1_0(\Omega)$ if $0\le \theta<1$ and $f\in L^{\left(\frac{2^*}{1-\theta}\right) '}(\Omega),$  or if $\theta\ge 1$ and $f$ is just an $L^1$-function. We also underline that, if $\theta=0$, we recover the classical regularity results. In this framework a natural question is how the presence of $g$ can affect the problem in order to deduce $H^1_0$-solutions when $\theta<1$ and $f\in L^m(\Omega)$ with $1<m<\left(\frac{2^*}{1-\theta}\right)'$. For various features of this kind of singular problems we refer to the following works and references therein \cite{bgh, cmss,cst, car, cc, dc,ddo,do, diaz, far,gmm,gmm2,gk,gcs,op,orpe,sz}.
  \\ \\
  For what concerns the regularizing effect given by the absorption term $g$, the first contribution comes from \cite{bgv}. Here the authors, when $h(s)=1$  and $f\in L^1(\Omega)$, deal with existence of solutions to problem \eqref{pbintro}; in particular they prove that larger is $q$ better is the Sobolev regularity of the solution.\\ In the same direction we also recall \cite{cir} where it is shown that if $f\in L^m(\Omega)$ with $m>1$ and 
  \begin{equation}\label{qintrocirmi}
  	q\ge \frac{1}{m-1},
  \end{equation}
  then the solution to \eqref{pbintro} has always finite energy. More regularizing effect of this kind are discussed in \cite{arbo,arbo2,amm,bc,croce} and their references.
  \\ \\In this paper we deal with the regularizing effect given by both the absorption lower order term and the rate to which $h(s)$ goes to zero as $s\to \infty$. In Theorem \ref{teo_p>1} below we prove the existence of solutions in $W^{1,p}_0(\Omega)$ if $h$ mildly blows up at the origin (i.e. $0\le \gamma\le 1$); in particular we reach $W^{1,p}_0$-solutions if $\theta< 1$ and
 \begin{equation}\label{qintro}
 q\ge \frac{1-m\theta}{m-1},
 \end{equation}   
 or, independently on $q$, if $\theta\ge 1$. Some remarks are in order:  the result of the above mentioned theorem is sharp as shown in Example \ref{examplesharp} below, moreover we also observe that as $\theta$ goes to zero we recover \eqref{qintrocirmi}, which fits with the result of \cite{cir} and finally if $\theta\ge \frac{1}{m}$ we just do not need an unbounded absorption term anymore in order to have finite energy solutions. We also recall that for $p=2$ some partial results for problems as in \eqref{pbintro} were proved in \cite{dco2} with some limitations on the choice of $g$ due to the need of applying the maximum principle; we underline that here no use of the maximum principle is in order to manage the possibly singular function $h$ and, hence, the function $g$ can be way more general. We conclude the discussion for $p>1$  highlighting that we also tackle \eqref{pbintro} in case $\gamma>1$, which is a quite different situation: in this case only locally finite solutions are expected to exist and just a power of the solution lies in $W^{1,p}_0(\Omega)$ for sufficiently regular $f$ as one can formally deduce by multiplying \eqref{pbintro} with $u^\gamma$. Therefore, the absorption term, by increasing the Lebesgue summability of the solution, allows to deduce the existence of local finite energy solutions even if the datum is not regular (see Theorem \ref{teo_p>1strong} below).
  \\ \\ When $p=1$, we refer to \cite{ABCM} where for the first time the authors proposed the Anzellotti theory (see \cite{anz}) to represent $\Delta_1=\operatorname{div}( |D u|^{-1} D u)$. Here they take advantage of the pairing theory $(z, Du)$ between a gradient of a function in $BV(\Omega)$ and a bounded vector field $z$ with $||z||_{L^\infty(\Omega)^N}\le 1$ having distributional divergence as a Radon measure with bounded total variation; in this way $z$ plays the role of $|D u|^{-1} D u$.
  \\ \\
  At the best of our knowledge the literature concerning problems as in \eqref{pbintro} with $p=1$ is limited. In absence of the absorption term and when $h$ is equal to one, then existence of a $BV$-solution is proved when $f\in L^N(\Omega)$, provided its norm is small enough (see \cite{CT,MST1}). When $f$ lies just in $L^1(\Omega)$ we refer to \cite{MST2}, where it is proved the existence of a suitable notion of solution to \eqref{pbintro}; for instance, they proved existence of solutions having just their truncations in $BV(\Omega)$. 
  In presence of an absorption term type we mainly refer to \cite{ls,ls2} where the authors deal with the regularizing effect given by a first order term when $h\equiv1$. Furthermore, in the very recent work \cite{dgs}, it is proved existence of a solution when $h(s)=s^{-\gamma}$ ($\gamma<1$) and $f\in L^N(\Omega)$ without requiring any smallness condition on the norm. In \cite{dgop} the authors obtain existence (and uniqueness when expected) of local $BV$-solutions to \eqref{pbintro} when $h$ is not necessarily monotone and possibly blows up with any $\gamma\ge0$ and $f \in L^N(\Omega)$.   For more features about problems involving the $1$-Laplace operator we refer to \cite{ADS,D,gmp,K,KS,MP}.
  \\ \\For what concerns our work we essentially prove (Theorem \ref{teo_p>1} below) that if $\gamma \le 1$, $f$ is just in $L^m(\Omega)$ ($m\ge 1$) and condition \eqref{qintro} is satisfied, then the presence of the absorption term gives rise to existence of a $BV$-solution to \eqref{pbintro}; we also highlight that, even if $\theta$ is equal to zero, we do not require any smallness assumption on the norm of $f$. Hence if $q$ is large enough we always have $BV$-solutions independently on the size of $f$ and its summability. One of the keys is that thanks to the absorption term, we are always in position to show that the pairing $(z, Du)$ is well defined. We also highlight that an additional difficulty is that our solutions are not expected to be bounded as we are not assuming regular data. This fact appears in Example \ref{exampleunbounded}, where an unbounded solution to a problem as \eqref{pbintro} is explicitly shown to exist. Finally we also show the existence of locally finite energy solutions in case $\gamma>1$ (Theorem \ref{teo_p>1strong} below). More precisely we prove that if $q$ is large enough then there exist local $BV$-solutions for any $f\in L^m(\Omega)$ with $m>1$.
  \\ \\
  Furthermore we deal with uniqueness of solutions for both cases $p>1$ and $p=1$; in Theorems \ref{unip>1}, \ref{uni_p=1} we prove uniqueness in the class of finite energy solutions if some integrability conditions on the absorption term are satisfied and $g,h$ are monotone functions. Finally, if $p>1$,  we show that even if the effect of the absorption is not strong enough in order to have finite energy solutions then anyway it gives rise to a regularization on the Sobolev regularity of the solutions.
  \\ \\ The paper is organized as follows. In Section \ref{sec:prel} we give some preliminaries, we extend to our framework the definition of the pairing $(z, Du)$, and we recall a Gauss-Green type formula.
  In Section \ref{sec:ass} we present the problem and the statement of the main existence results. In Section \ref{sec:apriori} we introduce the approximation scheme and deduce the main estimates needed in Section \ref{sec:exi} which is devoted to the proofs of existence results both in case $p>1$ and $p=1$. In Section \ref{sec:uni} we prove uniqueness of solutions when expected while in Section \ref{sec:example} we give examples showing the sharpness of our existence results and a more general result in case of infinite energy solutions ($p>1$) is also given. In the same section we also present a case where bounded solutions exist even in presence of rough data. Finally, in Section \ref{sec:strong}, we briefly deal with the case of a strong singularity, namely $h$ blows up faster at the origin. 
           
\subsection{Notations} 
\label{not}
For a given function $v$ we denote by $v^+=\max(v,0)$ and by $v^-= -\min (v,0)$. Moreover $\chi_{E}$ denotes the characteristic function of a set $E$. For a fixed $k>0$, we define the truncation functions $T_{k}:\R\to\R$ and $G_{k}:\R\to\R$ as follows
\begin{align*}
	T_k(s):=&\max (-k,\min (s,k)),\\
	G_k(s):=&(|s|-k)^+ \operatorname{sign}(s).
\end{align*}
We will also use the following functions
\begin{align}\label{Vdelta}
	\displaystyle
	V_{\delta,k}(s):=
	\begin{cases}
		1 \ \ &s\le k, \\
		\displaystyle\frac{k+\delta-s}{\delta} \ \ &k <s< k+\delta, \\
		0 \ \ &s\ge k+\delta,
	\end{cases}
\end{align}
and 
\begin{equation}\label{Sdelta}
	S_{\delta,k}(s):=1-V_{\delta,k}(s).	
\end{equation}
We denote by $\mathcal H^{N-1}(E)$ the $(N - 1)$-dimensional Hausdorff measure of a set $E$ while $|E|$ stands for its $N$-dimensional  Lebesgue measure.\\
For the entire paper $\Omega$ is an open bounded subset of $\R^N$ ($N\ge 1$) with Lipschitz boundary if $p=1$ while $\mathcal{M}(\Omega)$ is the usual space of Radon measures with finite total variation over $\Omega$. By $W^{1,p}_0(\Omega)$ we mean the Sobolev space with zero trace and by $L^{N,\infty}(\Omega)$ the classical Lorentz space. We refer to a Lebesgue space with respect to a Radon measure $\mu$ as $L^q(\Omega,\mu)$. We also denote by 
$$\DM(\Omega):=\{ z\in L^\infty(\Omega;\R^N) : \operatorname{div}z \in \mathcal{M}(\Omega) \},$$
and by $\DM_{\rm loc}(\Omega)$ its local version, namely the space of bounded vector field $z$ with $\operatorname{div}z \in \mathcal{M}(\omega)$ for every $\omega \subset\subset \Omega$.   
We also recall that
$$BV(\Omega):=\{ u\in L^1(\Omega) : Du \in \mathcal{M}(\Omega, \R^N) \}.$$ 
We underline that the $BV(\Omega)$ space endowed with the norm  
$$ ||u||_{BV(\Omega)}=\int_\Omega |u|\, + \int_\Omega|Du|\,,$$
or with
$$\displaystyle ||u||_{BV(\Omega)}=\int_{\partial\Omega}
|u|\, d\mathcal H^{N-1}+ \int_\Omega|Du|,$$
is a Banach space. We denote by $BV_{\rm loc}(\Omega)$ the space of functions in $BV(\omega)$ for every $\omega \subset\subset\Omega$.
For more properties regarding $BV$ spaces we refer to \cite{AFP}.\\
We explicitly remark that, if no otherwise specified, we will denote by $C$ several positive constants whose value may change from line to line and, sometimes, on the same line. These values will only depend on the data but they will never depend on the indexes of the sequences we will introduce.

\section{Preliminary facts}
\label{sec:prel}

In order to deal with the $1$-laplacian operator we briefly recall the theory of $L^\infty$-divergence-measure vector fields (see \cite{anz} and \cite{CF}).
First we recall that if $z\in \DM(\Omega)$ then it can be proved that $\operatorname{div}z $ is an absolutely continuous measure with respect to $\mathcal H^{N-1}$. 
\\Moreover, as in \cite{anz}, we define the following distribution $(z,Dv): C^1_c(\Omega)\to \mathbb{R}$ 
\begin{equation}\label{dist1}
\langle(z,Dv),\varphi\rangle:=-\int_\Omega v^*\varphi\operatorname{div}z-\int_\Omega
vz\cdot\nabla\varphi,\quad \varphi\in C_c^1(\Omega),
\end{equation}
where $v^* $ always denotes the precise representative of $v$. Following the idea in \cite{anz}, in \cite{MST2} and \cite{C} the authors prove that $(z, Dv)$ is well posed if $z\in \DM(\Omega)$ and $v\in BV(\Omega)\cap L^\infty(\Omega)$ since one can show that $v^*\in L^\infty(\Omega,\operatorname{div}z)$. Moreover in \cite{dgs} the authors show that \eqref{dist1} is well posed if $z\in \DM_{\rm loc}(\Omega)$ and $v\in BV_{\rm loc}(\Omega)\cap L^1_{\rm loc}(\Omega, \operatorname{div}z)$.
\\Moreover, reasoning as in \cite{dgs}, one deduces that, once $v^*\in L^1_{\rm loc}(\Omega, \operatorname{div}z)$, $(z, Dv)$ is a Radon measure with local finite total variation satisfying
\begin{equation*}\label{finitetotal}
|\langle   (z, Dv), \varphi\rangle| \le ||\varphi||_{L^{\infty}(U) } ||z||_{L^\infty(U)^N} \int_{U} |Dv|\,,
\end{equation*}
for all open set $U \subset \Omega$ and for all $\varphi\in C_c^1(U)$, from which one also deduces that
\begin{equation*}\label{finitetotal1}
\left| \int_B (z, Dv) \right|  \le  \int_B \left|(z, Dv)\right| \le  ||z||_{L^\infty(U)^N} \int_{B} |Dv|\,,
\end{equation*}
for all Borel sets $B$ and for all open sets $U$ such that $B\subset U \subset \Omega$.\\
We recall that every $z \in \mathcal{DM}^{\infty}(\Omega)$ has a weak trace on $\partial \Omega$ of the normal component of  $z$ which is denoted by
$[z, \nu]$, where $\nu(x)$ is the outward normal unit vector defined for $\mathcal H^{N-1}$-almost every $x\in\partial\Omega$ (see \cite{anz}). Moreover, it satisfies
\begin{equation*}\label{des1}
||[z,\nu]||_{L^\infty(\partial\Omega)}\le ||z||_{L^\infty(\Omega)^N},
\end{equation*}
and also that if $z \in \mathcal{DM}^{\infty}(\Omega)$ and $v\in BV(\Omega)\cap L^\infty(\Omega)$, then
\begin{equation}\label{des2}
v[z,\nu]=[vz,\nu]
\end{equation}
holds (see \cite{C}).\\
Finally, in \cite{dgs}, the authors prove that if $z\in \DM_{\rm loc}(\Omega)$ and $v\in BV(\Omega)\cap L^\infty(\Omega)$ such that $v^*\in L^1(\Omega,\operatorname{div}z)$ then $vz\in \DM(\Omega)$ and a weak trace can be defined as well as a Gauss-Green formula which we recall for the sake of completeness.
\begin{lemma}
	Let $z\in \DM_{\rm loc}(\Omega)$ and let $v\in BV(\Omega)\cap L^\infty(\Omega)$ such that $v^*\in L^1(\Omega,\operatorname{div}z)$ then 
	\begin{equation*}\label{green}
	\int_{\Omega} v^* \operatorname{div}z + \int_{\Omega} (z, Dv) = \int_{\partial \Omega} [vz, \nu] \ d\mathcal H^{N-1}.
	\end{equation*}	
\end{lemma}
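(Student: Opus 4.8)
The plan is to reduce the identity to the classical Anzellotti Gauss--Green formula for $\DM^\infty(\Omega)$ on the bounded Lipschitz set $\Omega$, the main point being to check that the product field $vz$ belongs to $\DM^\infty(\Omega)$ and not merely to $\DM_{\rm loc}(\Omega)$; once this is done, testing that formula against the constant function $1$ yields the claim in one line.

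First I would identify the distributional divergence of $vz$. Since $v^*\in L^1(\Omega,\operatorname{div}z)\subset L^1_{\rm loc}(\Omega,\operatorname{div}z)$ and $v\in BV(\Omega)\cap L^\infty(\Omega)\subset BV_{\rm loc}(\Omega)$, the pairing $(z,Dv)$ is a well-defined Radon measure with locally finite total variation, and unravelling the definition \eqref{dist1} gives, for every $\varphi\in C_c^1(\Omega)$,
$$-\int_\Omega vz\cdot\nabla\varphi=\int_\Omega v^*\varphi\,\operatorname{div}z+\langle (z,Dv),\varphi\rangle ,$$
i.e. $\operatorname{div}(vz)=v^*\operatorname{div}z+(z,Dv)$ in $\mathcal D'(\Omega)$. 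Next I would upgrade this to an identity between \emph{finite} Radon measures on $\Omega$: the hypothesis $v^*\in L^1(\Omega,\operatorname{div}z)$ says precisely that $v^*\operatorname{div}z$ is a finite measure, while the total variation bound recalled before the statement, applied with $B=U=\Omega$, together with $v\in BV(\Omega)$, gives
$$|(z,Dv)|(\Omega)\le ||z||_{L^\infty(\Omega)^N}\int_\Omega|Dv|<\infty .$$
Since moreover $vz\in L^\infty(\Omega;\R^N)$ (both $v$ and $z$ are bounded), we conclude $vz\in\DM^\infty(\Omega)$, and hence $vz$ possesses a weak normal trace $[vz,\nu]\in L^\infty(\partial\Omega)$.

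Then I would apply the Gauss--Green formula for divergence-measure fields on $\Omega$ (see \cite{anz,CF}): for $w\in\DM^\infty(\Omega)$ one has $\int_\Omega\operatorname{div}w=\int_{\partial\Omega}[w,\nu]\,d\mathcal H^{N-1}$ — this is the pairing formula tested with the function identically equal to $1$, whose pairing term vanishes. Taking $w=vz$ and using the measure identity above,
$$\int_{\partial\Omega}[vz,\nu]\,d\mathcal H^{N-1}=\int_\Omega\operatorname{div}(vz)=\int_\Omega v^*\operatorname{div}z+\int_\Omega(z,Dv),$$
which is exactly the asserted formula.

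The only genuinely delicate point is the global bookkeeping in the second step: although $\operatorname{div}z$ is a priori only locally a finite measure on $\Omega$, one must be sure that both $v^*\operatorname{div}z$ and $(z,Dv)$ are finite measures over all of $\Omega$ — the former by the standing assumption $v^*\in L^1(\Omega,\operatorname{div}z)$, the latter because $z$ is globally bounded and $v$ lies in $BV(\Omega)$, not just $BV_{\rm loc}(\Omega)$. If one prefers to avoid quoting the constant-function case of the Gauss--Green formula, the same conclusion can be reached by exhausting $\Omega$ with smooth open sets $\Omega_j\uparrow\Omega$, using the interior version of the identity on each $\Omega_j$ and passing to the limit via the definition of the weak normal trace of $vz\in\DM^\infty(\Omega)$.
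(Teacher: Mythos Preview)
Your proposal is correct and follows exactly the strategy the paper attributes to \cite{dgs}: the paper does not prove this lemma itself but states, just before it, that in \cite{dgs} one shows $vz\in\DM(\Omega)$ under these hypotheses and then derives the Gauss--Green identity, which is precisely your route. Your bookkeeping on why $v^*\operatorname{div}z$ and $(z,Dv)$ are both finite measures on all of $\Omega$ is the content of that step, and applying the classical Anzellotti formula to $w=vz$ with the constant test function concludes.
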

Hence, since we are interested in proving that the distribution \eqref{dist1} is well defined, in the next lemma we prove that, under suitable assumptions on $\operatorname{div}z$, it holds $v^*\in L^1_{\rm loc}(\Omega, \operatorname{div}z)$. 
\begin{lemma}\label{lempairing}
	Let $v\in BV_{\rm loc}(\Omega)$ be nonnegative and let $\xi :\mathbb{R}\to \mathbb{R}$ be a continuous nonnegative function with $\xi(0)=0$ such that $\xi(v)\in BV_{\rm loc}(\Omega)$. Moreover let $\chi_{\{v>0\}}\in BV_{\rm loc}(\Omega)$, $z\in \mathcal{DM}^{\infty}_{\rm loc}(\Omega)$ and let
	\begin{equation}\label{1}
	-\psi(v)^* \operatorname{div}z = \sigma \ \ \text{ as measures in $\Omega$, }
	\end{equation}
	where $ \psi(s) = 1$ or $\psi(s)= \chi_{\{s>0\}}$ and where $\sigma\in L^1_{\rm loc}(\Omega)$ such that $\sigma \xi(v) \in L^1_{\rm loc}(\Omega)$.
	Then $\xi(v)^*\in L^1_{\rm loc}(\Omega,\operatorname{div}z)$. 
\end{lemma}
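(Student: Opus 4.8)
The plan is to bound $\int_\omega \xi(v)^*\,d|\operatorname{div}z|$ for an arbitrary $\omega\subset\subset\Omega$, which is exactly the assertion that $\xi(v)^*\in L^1_{\rm loc}(\Omega,\operatorname{div}z)$ (note that $\xi(v)^*\ge0$ and that $\operatorname{div}z$ has locally finite total variation). The only structural inputs are that $\operatorname{div}z$ is absolutely continuous with respect to $\mathcal H^{N-1}$, recalled above, and the fine structure of $BV_{\rm loc}$ functions. First I would decompose $\Omega$, up to an $\mathcal H^{N-1}$-negligible set, as $A_0\cup A_+\cup J$, where $J=J_v$ is the jump set of $v$ (countably $(N-1)$-rectifiable, hence Lebesgue-negligible) and $A_0$, $A_+$ are the sets of Lebesgue points of $v$ at which, respectively, $v^*=0$ and $v^*>0$; this is the Federer--Vol'pert theorem, see \cite{AFP}. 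Since $|\operatorname{div}z|\ll\mathcal H^{N-1}$, these three Borel sets carry all of $|\operatorname{div}z|$, so it suffices to estimate the integral over each of them.

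On $A_0$ the contribution vanishes: since $\xi$ is continuous with $\xi(0)=0$ and $\xi(v)\in BV_{\rm loc}(\Omega)$, the precise representative of $\xi(v)$ coincides $\mathcal H^{N-1}$-a.e.\ on $A_0$ with $\xi(v^*)\equiv\xi(0)=0$, hence it is $|\operatorname{div}z|$-a.e.\ zero there. On $J$ I would show $|\operatorname{div}z|(J)=0$: restricting \eqref{1} to a Borel set $B\subseteq J$ and using $|B|=0$, the right-hand side is zero, so $\int_B\psi(v)^*\,d\operatorname{div}z=0$ for every such $B$, i.e.\ $\psi(v)^*\,(\operatorname{div}z\llcorner J)=0$. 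Now $\psi(v)^*\ge\tfrac12$ $\mathcal H^{N-1}$-a.e.\ on $J$: this is immediate when $\psi\equiv1$, and when $\psi=\chi_{\{\cdot>0\}}$ it holds because $\chi_{\{v>0\}}^*$ equals $\mathcal H^{N-1}$-a.e.\ one of $0,\tfrac12,1$, and the value $0$ is impossible on $J_v$, as it would force $\{v>0\}$ to have density $0$ at $x$, hence $v$ to be approximately continuous with value $0$ there, contradicting $x\in J_v$. Since $|\operatorname{div}z|\ll\mathcal H^{N-1}$, dividing by $\psi(v)^*$ gives $\operatorname{div}z\llcorner J=0$, so this contribution also vanishes.

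It remains to treat $A_+$, and this is where the hypothesis $\sigma\xi(v)\in L^1_{\rm loc}(\Omega)$ enters. On $A_+$ one has $\psi(v)^*=1$ $\mathcal H^{N-1}$-a.e.\ — trivially if $\psi\equiv1$, and if $\psi=\chi_{\{\cdot>0\}}$ because $\{v>0\}$ has density $1$ at every Lebesgue point of $v$ with positive value — hence $\psi(v)^*=1$ $\operatorname{div}z$-a.e.\ on $A_+$. Restricting \eqref{1} to $A_+$ therefore yields $\operatorname{div}z\llcorner A_+=-\sigma\,\mathcal L^N$, so $|\operatorname{div}z|\llcorner A_+=|\sigma|\,\mathcal L^N$. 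Using that $\xi(v)^*=\xi(v)$ $\mathcal L^N$-a.e.\ and $\xi(v)\ge0$,
\[
\int_\omega\xi(v)^*\,d|\operatorname{div}z|=\int_{A_+\cap\omega}\xi(v)\,|\sigma|\,dx\le\|\sigma\,\xi(v)\|_{L^1(\omega)}<\infty ,
\]
which is the claim.

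The main obstacles are the two $BV$ fine-property facts used above: the identity $\xi(v)^*=\xi(v^*)$ at the Lebesgue points of $v$, and the lower bound $\psi(v)^*\ge\tfrac12$ on $J_v$. Both have to be argued through approximate limits, since $v$ — and hence $\xi(v)$ — is not assumed bounded, which is the genuinely technical part; once they are in hand, the rest is bookkeeping of the restriction of $\operatorname{div}z$ to the three pieces. Alternatively, one can first prove the statement for the bounded truncations $T_k(\xi(v))\in BV_{\rm loc}(\Omega)\cap L^\infty(\Omega)$, which are admissible in the pairing theory recalled above, obtain through the same decomposition a bound on $\int_\omega T_k(\xi(v))^*\,d|\operatorname{div}z|$ independent of $k$, and then pass to the limit $k\to\infty$ by monotone convergence.
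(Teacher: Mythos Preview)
Your proof is correct but takes a genuinely different route from the paper. The paper's argument is much shorter: it multiplies \eqref{1} directly by $\xi(v)^*$ to get $\xi(v)^*\psi(v)^*\in L^1_{\rm loc}(\Omega,\operatorname{div}z)$ (the right-hand side becoming $\sigma\,\xi(v)\in L^1_{\rm loc}$), which already finishes the case $\psi\equiv1$; for $\psi=\chi_{\{\cdot>0\}}$ it then writes $\xi(v)^*=\xi(v)^*\chi^*_{\{v>0\}}+\xi(v)^*\chi^*_{\{v=0\}}$ and observes the pointwise inequality $\xi(v)^*\chi^*_{\{v=0\}}\le\xi(v)^*\chi^*_{\{v>0\}}$ $\mathcal{H}^{N-1}$-a.e.\ (where $\chi^*_{\{v=0\}}>\tfrac12$ one necessarily has $\xi(v)^*=0$), so $\xi(v)^*\le 2\,\xi(v)^*\chi^*_{\{v>0\}}$ and the claim follows. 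Your Federer--Vol'pert decomposition $A_0\cup A_+\cup J_v$ unpacks exactly the same fine-structure facts but treats them piecewise, and along the way isolates the stronger statement $|\operatorname{div}z|(J_v)=0$, which the paper does not single out. The paper's inequality trick is slicker and avoids the three-piece case analysis; your geometric decomposition makes the role of each hypothesis more transparent (continuity of $\xi$ with $\xi(0)=0$ is used only on $A_0$, the lower bound $\psi(v)^*\ge\tfrac12$ only on $J_v$, and the integrability of $\sigma\,\xi(v)$ only on $A_+$).
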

\begin{proof}
		Let us observe that, since  $\xi(v)\in BV_{\rm loc}(\Omega)$, $\xi(v)^*$ is measurable with respect to $\psi(v)^* \operatorname{div}z$. Then, for any $\omega \subset \subset \Omega$,  one has that
	\begin{equation}\label{intfinito}
		-\int_\omega  \xi(v)^*\psi(v)^* \operatorname{div}z = \int_\omega \sigma \xi(v),
	\end{equation} 
	and, by the assumptions on $\sigma$, the right hand side of \eqref{intfinito} is finite and if $\psi(s)=1$ the proof is concluded. Hence, assuming $\psi(s)= \chi_{\{s>0\}}$, from \eqref{intfinito} one has that $\xi(v)^*\chi^*_{\{v>0\}}\in L^1_{\rm loc}(\Omega,\operatorname{div}z)$. Now observe that
	$$\xi(v)^* = \xi(v)^*\chi^*_{\{v>0\}} + \xi(v)^*\chi^*_{\{v=0\}},$$
	for $\mathcal{H}^{N-1}$-almost every $x\in \Omega$. Finally we note that 
	$$\xi(v)^*\chi^*_{\{v=0\}}\le \xi(v)^*\chi^*_{\{v>0\}},$$
	for $\mathcal{H}^{N-1}$-almost every $x\in \Omega$ and this concludes the proof of the Lemma.
\end{proof}
\begin{remark}
We remark that under the assumptions of Lemma \ref{lempairing} if one supposes $\xi(v)\in BV_{\rm loc}(\Omega)$ and $\sigma\in L^1_{\rm loc}(\Omega)$ such that $\sigma \xi(v)\in L^1(\Omega)$ then one obtains that $\xi(v)^*\in L^1(\Omega,\operatorname{div}z)$.
\end{remark}
We close this section with a lemma which is a slight improvement of a result already contained in \cite{dgop,dgs} and which consists in a regularity result for the vector field $z$.

\begin{lemma}\label{lemmal1}
	Let $0\le \sigma \in L^{1}_{\rm{loc}}(\Omega)$, let $\tau\in L^1(\Omega)$ and let $z\in \mathcal{D}\mathcal{M}^\infty_{\rm{loc}}(\Omega)$ with $||z||_{L^\infty(\Omega)^N}\le 1$ such that 
	\begin{equation}\label{lemma_distr*}
	-\operatorname{div}z +\tau= \sigma \text{  as measures in $\Omega$,  }
	\end{equation}
	then 
	\begin{equation}\label{l1}
	\operatorname{div}z \in L^1(\Omega).
	\end{equation}	
\end{lemma}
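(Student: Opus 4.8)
The plan is to read \eqref{lemma_distr*} as the identity $\operatorname{div}z=\tau-\sigma$ between measures, which identifies $\operatorname{div}z$ with the function $\tau-\sigma\in L^1_{\rm loc}(\Omega)$; hence \eqref{l1} will follow as soon as one shows that the local integrability of $\sigma$ is in fact global, i.e.\ $\sigma\in L^1(\Omega)$, since $\tau\in L^1(\Omega)$ is assumed. The mechanism for this upgrade is a cut-off argument that exploits both the sign of $\sigma$ and the bound $||z||_{L^\infty(\Omega)^N}\le 1$.

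The first step is to produce, using that $\Omega$ is bounded with Lipschitz boundary, a sequence $\phi_n\in C^1_c(\Omega)$ with $0\le\phi_n\le 1$, $\phi_n\to 1$ pointwise in $\Omega$, and
\[
\int_\Omega |\nabla\phi_n|\le C\qquad\text{for every }n,
\]
with $C$ independent of $n$. Such a sequence is obtained by mollifying the Lipschitz functions $x\mapsto\min\!\big(1,\varepsilon_n^{-1}(\operatorname{dist}(x,\partial\Omega)-\varepsilon_n)^+\big)$ with $\varepsilon_n\downarrow 0$: their gradients are supported in the strip $\{\varepsilon_n<\operatorname{dist}(\cdot,\partial\Omega)<2\varepsilon_n\}$ and bounded there by $\varepsilon_n^{-1}$, so the uniform bound on $\int_\Omega|\nabla\phi_n|$ reduces to the estimate $|\{x\in\Omega:\operatorname{dist}(x,\partial\Omega)<\varepsilon\}|\le C\varepsilon$ for small $\varepsilon$, which holds for Lipschitz domains. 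I expect this construction — and more precisely the use of the Lipschitz regularity of $\partial\Omega$ to keep the measure of the boundary strips linear in $\varepsilon$ — to be the only genuinely delicate point; the rest is routine.

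The second step is to test \eqref{lemma_distr*} with $\phi_n$. Since $\phi_n\in C^1_c(\Omega)$ and $\operatorname{div}z$ is a locally finite Radon measure with density $\tau-\sigma$, integrating by parts gives
\[
\int_\Omega \phi_n\,\sigma = \int_\Omega \phi_n\,\tau + \int_\Omega z\cdot\nabla\phi_n \le ||\tau||_{L^1(\Omega)} + \int_\Omega|\nabla\phi_n| \le ||\tau||_{L^1(\Omega)} + C,
\]
where we used $0\le\phi_n\le 1$, $\sigma\ge 0$ and $||z||_{L^\infty(\Omega)^N}\le 1$; the left-hand side is finite for each fixed $n$ because $\phi_n$ has compact support in $\Omega$ and $\sigma\in L^1_{\rm loc}(\Omega)$. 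Since $\phi_n\sigma\ge 0$ and $\phi_n\sigma\to\sigma$ almost everywhere in $\Omega$, Fatou's lemma yields $\int_\Omega\sigma\le \liminf_n\int_\Omega\phi_n\sigma\le ||\tau||_{L^1(\Omega)}+C<\infty$, so $\sigma\in L^1(\Omega)$. Consequently $\operatorname{div}z=\tau-\sigma\in L^1(\Omega)$, which is \eqref{l1}.
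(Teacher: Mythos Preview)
Your proof is correct. The core idea---show $\sigma\in L^1(\Omega)$ by testing the equation against compactly supported approximations of the constant $1$ whose $W^{1,1}$ norms stay bounded, then apply Fatou---is the same mechanism the paper uses, but your route to it is more direct.

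The paper proceeds in two stages: it first extends the class of admissible test functions in \eqref{lemma_distr*} from $C^1_c(\Omega)$ to all of $W^{1,1}_0(\Omega)\cap L^\infty(\Omega)$ by a double approximation argument, and then invokes Lemma~5.5 of Anzellotti \cite{anz} to produce, for any $\tilde v\in W^{1,1}(\Omega)\cap L^\infty(\Omega)$, a sequence $w_n\in W^{1,1}(\Omega)$ with the same trace, $w_n\to 0$ in $\Omega$, and $\int_\Omega|\nabla w_n|\le\int_{\partial\Omega}\tilde v\,d\mathcal H^{N-1}+1/n$; testing with $|\tilde v-w_n|$ and taking $\tilde v\equiv 1$ then gives $\sigma\in L^1(\Omega)$. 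Your explicit distance-function cutoffs $\phi_n$ play exactly the role of the functions $1-w_n$ in the paper's argument with $\tilde v\equiv 1$, and the Lipschitz regularity of $\partial\Omega$ enters at the same point (in the paper, hidden inside Anzellotti's lemma; in your proof, visible as the linear strip estimate). Your approach avoids both the detour through $W^{1,1}_0\cap L^\infty$ test functions and the citation of \cite{anz}, at the cost of not establishing the intermediate identity \eqref{appe1} for general $v$---which, however, is not used elsewhere.
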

\begin{proof}
	First we prove that the set of admissible test functions in \eqref{lemma_distr*} can be enlarged. If one takes $0\le v\in W^{1,1}_0(\Omega)\cap L^\infty(\Omega)$ then there exists a sequence of nonnegative functions $v_{\eta,n}\in C^1_c(\Omega)$ such that (with an abuse of notation, $v_n$ will be the almost everywhere limit of $v_{\eta,n}$ as $\eta \to 0$)
	\begin{equation}\label{proptest}
	\begin{cases}
	v_{\eta,n} \stackrel{\eta  \to 0}{\to} v_{n} \stackrel{n  \to \infty}{\to} v \ \ \ \text{in } W^{1,1}_0(\Omega) \text{ and } *\text{-weakly in } L^\infty(\Omega), \\
	\supp v_n \subset \subset \Omega: 0\le v_n\le v \ \ \ \text{for all } n\in \mathbb{N}.
	\end{cases}
	\end{equation}
	A good example of such $v_{\eta,n}$ is given by $\rho_\eta \ast (v \wedge \phi_n)$ ($v \wedge \phi_n:= \inf (v,\phi_n)$) where $\rho_\eta$ is a sequence of smooth mollifier while $\phi_n$ is a sequence of nonnegative functions in $C^1_c(\Omega)$ which converges to $v$ in $W^{1,1}_0(\Omega)$.
	If one takes $v_{\eta,n}$ as test function in \eqref{lemma_distr*} then
	\begin{equation*}\label{contest}
	\int_\Omega z \cdot \nabla v_{\eta,n} + \int_\Omega \tau v_{\eta,n} = \int_\Omega \sigma v_{\eta,n}.
	\end{equation*}
	We first pass to the limit as $\eta$ goes to zero. Recalling \eqref{proptest}, for the left hand side we pass to the limit since $z\in L^\infty(\Omega)^N$ and $\tau\in L^1(\Omega)$
	For the right hand side we observe that, for $\eta$ small enough,  $\supp v_{\eta,n} \subset\subset \Omega$ and then we can pass the limit by Lebesgue Theorem since $\sigma\in L^1_{\rm loc}(\Omega)$. Hence one has
	\begin{equation}\label{contest2}
	\int_\Omega z \cdot \nabla v_{n} + \int_\Omega \tau v_{n} = \int_\Omega \sigma v_{n}.
	\end{equation}
	Now we need to pass to the limit with respect to $n$; for the left hand side we can reason as before as $\eta\to 0$.
	For the right hand side of \eqref{contest2} in order to apply the Lebesgue theorem, one needs that $\sigma v \in L^1(\Omega)$.
	Hence one has
	\begin{equation*}
	\int_\Omega \sigma v_n = \int_\Omega z\cdot \nabla v_n + \int_\Omega \tau v_n  \leq ||z||_{L^\infty(\Omega)^N}\int_\Omega  |\nabla v_n| + ||v||_{L^\infty(\Omega)}||\tau||_{L^1(\Omega)}\le C,
	\end{equation*}
	which, after an application of the Fatou Lemma with respect to $n$, gives $\sigma v \in L^1(\Omega)$.  
	This means we can pass to the limit also in the right hand side of \eqref{contest2} deducing 
	\begin{equation}\label{appe1}
	\int_\Omega z \cdot \nabla v + \int_\Omega \tau v = \int_\Omega \sigma v,
	\end{equation}	 
	for all nonnegative $v\in W^{1,1}_0(\Omega)\cap L^\infty(\Omega)$.\\
	Now it follows from Lemma $5.5$ of \cite{anz} that if $\tilde{v} \in W^{1,1}(\Omega)\cap L^\infty(\Omega)$ then there exists $w_n\in
	W^{1, 1}(\Omega)\cap C(\Omega)$ having $w_n|_{\partial\Omega}=\tilde{v}|_{\partial\Omega}$, $\displaystyle\int_\Omega|\nabla
	w_n|\,dx\le\displaystyle\int_{\partial\Omega}\tilde{v}\,d\mathcal H^{N-1}+\frac1n\,,$ and such that $w_n$ tends to $0$ in $\Omega$. 
	Hence $|v-w_n|\in W_0^{1,1}(\Omega)$ can be chosen as a test function in \eqref{appe1} taking to
	\begin{align*}
	\int_\Omega \sigma|\tilde{v}-w_n| &= \int_\Omega z\cdot\nabla|\tilde{v}-w_n| + \int_\Omega \tau |\tilde{v}-w_n| 
	\\
	&\le ||z||_{L^\infty(\Omega)^N}\int_\Omega|\nabla \tilde{v}| + ||z||_{L^\infty(\Omega)^N} \int_\Omega|\nabla w_n| +\int_\Omega \tau |\tilde{v}-w_n|
	\\
	&\le \int_\Omega|\nabla \tilde{v}|+\int_{\partial\Omega}\tilde{v}\,d\mathcal H^{N-1}+\frac1n +\int_\Omega \tau |\tilde{v}-w_n|.
	\end{align*}
	An application of the Fatou Lemma gives
	\begin{equation}\label{appl1}
	\int_\Omega \sigma|\tilde{v}|\le \int_\Omega|\nabla \tilde{v}|+\int_{\partial\Omega}\tilde{v}\,d\mathcal H^{N-1} + \int_\Omega \tau |\tilde{v}|.
	\end{equation}
	Now if one takes $\tilde{v}\equiv 1$ in \eqref{appl1} then one gets $\sigma\in L^1(\Omega)$, which also implies that $\operatorname{div}z \in L^1(\Omega)$.
\end{proof}

\section{Main assumptions and a first existence result}
\label{sec:ass}

Let us consider the following problem
\begin{equation}
\label{pb1}
\begin{cases}
\dis -\Delta_p u + g(u)= h(u)f & \text{in}\;\Omega,\\
u=0 & \text{on}\;\partial\Omega,
\end{cases}
\end{equation}
where $1\le p<N$, $\Omega$ is an open bounded subset of $\mathbb{R}^N$ with Lipschitz boundary if $p=1$, $f\in L^m(\Omega)$ with $m\ge 1$ and $h:[0,\infty)\to [0,\infty]$ is a continuous and possibly singular function with $h(0)\not=0$ which it is finite outside the origin and such that
\begin{equation}
\exists \gamma\ge 0, \underline{C},\underline{s}>0: h(s)\le \frac{\underline{C}}{s^\gamma} \ \text{for all } s\le \underline{s}, 
\label{h1}\tag{h1}
\end{equation}
and 
\begin{equation}
\exists \theta\ge 0, \overline{C}>0, \overline{s}> \underline{s}: h(s)\le \frac{\overline{C}}{s^\theta} \ \text{for all } s\ge  \overline{s}. 
\label{h2}\tag{h2}
\end{equation}
We underline that both $\gamma$ and $\theta$ are allowed to be zero so that a continuous and bounded function is an admissible choice.
The absorption term $g:[0,\infty)\to [0,\infty)$ is a continuous function such that $g(0)=0$ and, if $\theta<1$, the following growth condition at infinity holds
\begin{equation}\label{g1}\tag{g1}
\exists q \ge \frac{1-m\theta}{m-1}, \  \exists\nu, s_1>0 : g(s)\geq \nu s^{q} \ \text{for all}\;s\geq s_1.
\end{equation}
We explicitly observe that $g$ can be any nonnegative continuous function with $g(0)=0$ if $\theta \ge 1$. Indeed, as we will see, if this is the case then the regularizing effect given by the right hand side of \eqref{pb1} is already sufficient to have solutions in the energy space (i.e. solutions in $W^{1,p}_0(\Omega)$ if $p>1$ and $BV(\Omega)$ if $p=1$). If $0\le \theta <1$, i.e. the function $h(s)$ does not go to zero quickly enough as $s\to \infty$, we need to impose that $g(s)$ "grows" at least as a (not necessarily positive) power as $s\to\infty$.   
 The way the $p$-laplacian operator is understood is quite different between cases $1<p<N$ and $p=1$, which means the need of two notions of distributional solution to problem \eqref{pb1}.  
\\The first one concerns the case $1<p<N$.
\begin{defin}
	\label{weakdefp>1}
	Let $1<p<N$ then a nonnegative $u\in W^{1,1}_0(\Omega)$ such that $|\nabla u|^{p-1} \in L^1(\Omega)$ is a solution to problem \eqref{pb1} if $g(u), h(u)f \in L^1_{\rm loc}(\Omega)$ and it holds
	\begin{align}\label{def_distr}
	\int_{\Omega} |\nabla u|^{p-2}\nabla u \cdot \nabla \varphi + \int_{\Omega} g(u)\varphi = \int_{\Omega} h(u)f\varphi, \ \ \ \forall \varphi\in C^1_c(\Omega).
	\end{align}
\end{defin}
Then we give the notion of solution when $p=1$; here for a solution we mean a $BV$-function and we need to introduce a vector field which formally plays the role of $|Du|^{-1}Du$.
\begin{defin}
\label{weakdefpositive}
			Let $p=1$ then a nonnegative $u\in BV(\Omega)$ such that $\chi_{\{u>0\}} \in BV_{\rm loc}(\Omega)$ is a solution to problem \eqref{pb1} if $g(u), h(u)f \in L^1_{\rm loc}(\Omega)$ and if there exists $z\in \mathcal{D}\mathcal{M}^\infty_{\rm loc}(\Omega)$ with $||z||_{L^\infty(\Omega)^N}\le 1$ such that
			\begin{align}
				&-\psi^*(u) \operatorname{div}z + g(u) = h(u)f \ \ \text{as measures in }\Omega, \label{def_distrp=1}
				\\
				&\text{where } \psi(s) = \begin{cases}1 \ \ \ &\text{if } h(0)<\infty, \\ \chi_{\{s>0\}} \ \ \ &\text{if } h(0)=\infty, \end{cases} \nonumber
				\\
				&(z,Du)=|Du| \label{def_zp=1} \ \ \ \ \text{as measures in } \Omega,
				\\
				&T_k(u(x)) + [T_k(u)z,\nu] (x)=0 \label{def_bordop=1}\ \ \ \text{for  $\mathcal{H}^{N-1}$-a.e. } x \in \partial\Omega \ \text{and for every} \ k>0.
			\end{align}
\end{defin}
\begin{remark}\label{rempos}
	We highlight some features of Definition \ref{weakdefpositive}. First of all we observe that the presence of a characteristic function when $h(0)=\infty$ seems to be quite natural; in some sense, we read the distribution formulation where the solution is strictly positive. Indeed one can observe that $h(u)f\in L^1_{\rm loc}(\Omega)$ means that
	 \begin{equation}\label{uposrem}
	 \{u=0\}\subset \{f=0\},
	 \end{equation}
	and then 
	$$\int_\Omega h(u)f\varphi = \int_\Omega h(u)f\chi_{\{u>0\}}\varphi.$$
	We also observe that if $f>0$ almost everywhere in $\Omega$ then \eqref{uposrem} guarantees that $u>0$ almost everywhere in $\Omega$ which implies that \eqref{def_distrp=1} reads as 
	\begin{equation}\label{distrfpos}
	-\operatorname{div}z + g(u) = h(u)f \ \text{as measures in }\Omega,
	\end{equation}	
	as for the case $h(0)<\infty$ and this will be essential in order to prove uniqueness Theorem \ref{uni_p=1} below. Indeed if \eqref{distrfpos} holds and one also has $g(u)\in L^1(\Omega)$ then it can be deduced that $\operatorname{div}z \in L^1(\Omega)$ (see Lemma \ref{lemmal1}), i.e. $z\in \DM(\Omega)$. Moreover, once $z\in \DM(\Omega)$, recalling \eqref{des2}, then \eqref{def_bordop=1} easily takes to 
	$$	u(x)( 1 + [z,\nu] (x))=0 \ \ \  \text{for  $\mathcal{H}^{N-1}$-a.e. } x \in \partial\Omega.$$
\end{remark}
Now we are ready to state our existence theorem. 
\begin{theorem}\label{teo_p>1}
	Let $1\le p<N$, let $h$ satisfy \eqref{h1} with $\gamma\le1$, \eqref{h2}, and suppose that one of the following assumptions hold:
	\begin{itemize}
		\item[i)] $\theta\ge1 \text{ and } f\in L^1(\Omega)$;

		\item[ii)] $\theta<1,f\in L^{m}(\Omega)$ with $m>1$, and $g$ satisfies \eqref{g1}. 
	\end{itemize}	
	Then there exists a solution $u$ to problem \eqref{pb1} such that $g(u)u\in L^1(\Omega)$. Moreover if $1<p<N$ then $u$ belongs to $W^{1,p}_0(\Omega)$.
\end{theorem}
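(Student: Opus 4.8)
The plan is to obtain $u$ as a limit of solutions of regularized problems with bounded, non-singular data, running the usual a priori estimate / compactness scheme and isolating where the absorption term $g$ is used. For $n\in\mathbb N$ I set $f_n:=T_n(f)$, $h_n:=\min(h,n)$ and, for fixed $1<p<N$, take a nonnegative solution $u_n\in W^{1,p}_0(\Omega)\cap L^\infty(\Omega)$ of
\begin{equation*}
-\Delta_p u_n+g(u_n)=h_n(u_n)f_n\ \text{ in }\Omega,\qquad u_n=0\ \text{ on }\partial\Omega,
\end{equation*}
whose existence follows from Schauder's theorem together with a Stampacchia $L^\infty$ bound (for fixed $n$, the datum being bounded and $g$ continuous with $g(0)=0$). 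Exploiting $h(0)\neq0$ one proves the local lower bound: for every $\omega\subset\subset\Omega$ there is $c_\omega>0$ with $u_n\ge c_\omega$ on $\omega$ for all large $n$ (a by-now standard feature of singular problems, requiring no comparison principle on $g$), so that $h_n(u_n)f_n\le C_\omega f$ on $\omega$ and the singularity of $h$ is harmless away from $\partial\Omega$.

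\textbf{A priori estimate.} Testing with $u_n$ gives $\int_\Omega|\nabla u_n|^p+\int_\Omega g(u_n)u_n=\int_\Omega h_n(u_n)f_nu_n$, and I split the right-hand side over $\{u_n\le\underline s\}$, $\{\underline s<u_n<\overline s\}$, $\{u_n\ge\overline s\}$. On the first two sets, \eqref{h1} with $\gamma\le1$ (so $u_n^{1-\gamma}\le\underline s^{\,1-\gamma}$) and the boundedness of $h$ away from the origin bound the contribution by $C\|f\|_{L^1}$. On $\{u_n\ge\overline s\}$, \eqref{h2} gives $h_n(u_n)f_nu_n\le\overline C f_nu_n^{1-\theta}$; if $\theta\ge1$ then $u_n^{1-\theta}\le\overline s^{\,1-\theta}$ and the whole right-hand side is $\le C\|f\|_{L^1}$, so $g$ plays no role. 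If $\theta<1$, Hölder with exponents $m,m'$ gives $\overline C\int f_nu_n^{1-\theta}\le C\|f\|_{L^m}\big(\int_{\{u_n\ge\overline s\}}u_n^{(1-\theta)m'}\big)^{1/m'}$; the point of \eqref{g1} is exactly that $q+1\ge(1-\theta)m'$, whence $u_n^{(1-\theta)m'}\le C+\tfrac1\nu g(u_n)u_n$ on $\{u_n\ge\overline s\}$, and since $1/m'<1$, Young's inequality absorbs this term into $\int_\Omega g(u_n)u_n$. In every case
\begin{equation*}
\int_\Omega|\nabla u_n|^p+\int_\Omega g(u_n)u_n\le C
\end{equation*}
uniformly in $n$, with $C$ remaining bounded as $p\to1^+$.

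\textbf{Passage to the limit for $1<p<N$.} Up to a subsequence $u_n\rightharpoonup u$ in $W^{1,p}_0(\Omega)$ and a.e., so $u\ge c_\omega$ a.e. on each $\omega\subset\subset\Omega$ (in particular $u>0$ a.e.) and $\int_\Omega g(u)u\le C$ by Fatou. The a.e. convergence $\nabla u_n\to\nabla u$ — the decisive step for passing to the limit in the $p$-Laplacian term — is obtained by the standard localized argument (test with $(T_k(u_n)-T_k(u))\varphi$, $\varphi\in C^1_c(\Omega)$, using the local lower bound to control $h_n(u_n)f_n$ and the $L^1$ bound on $g(u_n)u_n$), whence $|\nabla u_n|^{p-2}\nabla u_n\rightharpoonup|\nabla u|^{p-2}\nabla u$ in $L^{p'}(\Omega)^N$. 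Since $\int_{\{u_n>k\}}g(u_n)\le\tfrac1k\int_\Omega g(u_n)u_n$ and $g$ is bounded on $[0,k]$, $\{g(u_n)\}$ is equi-integrable, so $g(u_n)\to g(u)$ in $L^1(\Omega)$ by Vitali; and $h_n(u_n)f_n\to h(u)f$ in $L^1_{\rm loc}(\Omega)$ by dominated convergence thanks to $h_n(u_n)f_n\le C_\omega f$ on $\omega$. Passing to the limit in the weak formulation gives a solution in the sense of Definition~\ref{weakdefp>1}, with $g(u)u\in L^1(\Omega)$, $u\in W^{1,p}_0(\Omega)$ and $|\nabla u|^{p-1}\in L^1(\Omega)$ by Hölder. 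The main obstacle here is precisely the a.e. gradient convergence, which must be localized because $h_n(u_n)f_n$ is equi-integrable only on compact subsets of $\Omega$.

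\textbf{The case $p=1$.} I would feed the family $\{u_p\}_{1<p<N}$ of solutions just constructed into a second limit $p\to1^+$. The uniform bounds give $u_p\to u$ in $L^1(\Omega)$ with $u\in BV(\Omega)$, $\chi_{\{u>0\}}\in BV_{\rm loc}(\Omega)$, $\int_\Omega g(u)u\le C$, and $z_p:=|\nabla u_p|^{p-2}\nabla u_p\rightharpoonup z$ $*$-weakly in $L^\infty(\Omega)^N$ along a subsequence, with $\|z\|_{L^\infty(\Omega)^N}\le1$ since $\||\nabla u_p|^{p-1}\|_{L^{p'}(\Omega)}=\big(\int_\Omega|\nabla u_p|^p\big)^{1/p'}\le C^{1/p'}\to1$. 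Passing to the limit in the equations gives $-\psi^*(u)\operatorname{div}z+g(u)=h(u)f$ as measures. Now $g(u)u\in L^1(\Omega)$ and, passing the a priori estimate to the limit, also $h(u)fu\in L^1(\Omega)$; hence Lemma~\ref{lempairing} (with $\xi=\mathrm{id}$ and $\sigma=h(u)f-g(u)$) gives $u^*\in L^1(\Omega,\operatorname{div}z)$, so the pairing $(z,Du)$ is well defined — this is precisely the point where the absorption term is essential. The remaining identifications, $(z,Du)=|Du|$ and the boundary condition \eqref{def_bordop=1}, follow from the standard machinery for the $1$-Laplacian: the inequality $(z,Du)\le|Du|$ is automatic from $\|z\|_{L^\infty}\le1$, while the reverse is obtained from lower semicontinuity, $\int_\Omega|Du|\le\liminf_{p\to1}\big(\int_\Omega|\nabla u_p|^p\big)^{1/p}|\Omega|^{1-1/p}$, combined with the energy identity for the $u_p$-problems and the Gauss–Green formula of Section~\ref{sec:prel} applied to the limit equation tested with $T_k(u)$ — legitimate because $\operatorname{div}z\in L^1(\Omega)$ by Lemma~\ref{lemmal1}, using $g(u)\in L^1(\Omega)$ — matching the two expressions as $k\to\infty$. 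For $p=1$ it is this simultaneous identification of $(z,Du)$ and of the boundary condition that is the main obstacle, and it is made possible only by the absorption term, which forces $g(u)u\in L^1(\Omega)$ and thereby delivers both the well-posedness of $(z,Du)$ and $\operatorname{div}z\in L^1(\Omega)$.
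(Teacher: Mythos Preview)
Your a priori estimate is essentially the paper's Lemma~\ref{lemmapriori0}, and the two-stage limit ($n\to\infty$ then $p\to1^+$) is also the paper's architecture. But the core of your passage to the limit rests on a claim that does not hold under the stated hypotheses.

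\textbf{The local lower bound is not available.} You assert that for every $\omega\subset\subset\Omega$ there is $c_\omega>0$ with $u_n\ge c_\omega$ on $\omega$, ``requiring no comparison principle on $g$''. This is not true in general here. First, $f$ is only assumed nonnegative: where $f$ vanishes, $u_n$ and the limit $u$ may vanish as well --- the paper records exactly this, proving only $\{u_p=0\}\subset\{f=0\}$ (see \eqref{upos} and \eqref{u0f0}). Second, even when $f>0$ a.e., the standard route to a \emph{uniform-in-$n$} lower bound is monotonicity of $n\mapsto u_n$ via comparison, which in turn needs $g$ non-decreasing; the paper explicitly dispenses with any maximum/comparison principle precisely to allow a general continuous $g$. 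Since your control of the singular term ($h_n(u_n)f_n\le C_\omega f$ on $\omega$), your $L^1_{\rm loc}$ convergence of the right-hand side, and your localized gradient-convergence argument all hinge on this lower bound, the gap propagates through the whole $p>1$ step.

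\textbf{How the paper avoids this.} Instead of a pointwise lower bound, the paper first derives the integral estimate \eqref{lemstimal1}, uses Fatou to get $h(u_p)f\in L^1_{\rm loc}(\Omega)$, and then neutralizes the set $\{u_n\le\delta\}$ by testing with $V_{\delta,\delta}(u_n)\varphi$ (see \eqref{limn3}); on $\{u_n>\delta\}$ dominated convergence applies because $h$ is bounded away from zero. Gradient a.e. convergence comes from \cite{bm}, using only that $h_n(u_n)f_n$ and $g(u_n)$ are bounded in $L^1_{\rm loc}$ --- no lower bound needed.

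\textbf{A knock-on issue for $p=1$.} You invoke Lemma~\ref{lemmal1} to get $\operatorname{div}z\in L^1(\Omega)$, but that lemma needs the equation in the form $-\operatorname{div}z+\tau=\sigma$. When $h(0)=\infty$ and $f$ is merely nonnegative, the limiting equation is $-\chi^*_{\{u>0\}}\operatorname{div}z+g(u)=h(u)f$ (see \eqref{def_distrp=1}), and $\operatorname{div}z\in L^1(\Omega)$ is \emph{not} asserted by the paper in this generality (cf.\ Remark~\ref{rempos}). The paper works instead with $z\in\DM_{\rm loc}(\Omega)$ and proves \eqref{def_zp=1} and \eqref{def_bordop=1} via the auxiliary identity \eqref{peru}, Lemma~\ref{lempairing}, and the truncation argument leading to \eqref{troncata}; your sketch would need to be reorganized along those lines once the lower bound is removed.
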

\begin{remark}
	For the sake of presentation, we separately present the result in the case $\gamma>1$ in Section \ref{sec:strong} since, as already discussed in the introduction, the solutions have just locally finite energy and the notion of solution is different (see Definitions \ref{distributional} and \ref{weakdefpositivestrong} below).\\
	We also highlight that Theorem \ref{teo_p>1} in case $p>1$ is sharp as shown in Example \ref{examplesharp} below: if $q<  \frac{1-m\theta}{m-1}$ then, in general, is not possible to expect finite energy solutions. Moreover in Example \ref{exampleunbounded}, for $p=1$, we find an unbounded solution to the problem for a rough datum and for any $q$: this means that the regularizing effect of $g$ is not strong enough to deduce the boundedness of the solution. Finally we remark that between cases i) and ii) of Theorem \ref{teo_p>1} there is continuity in the summability of the datum. Indeed in case ii) the condition on $q$ implies that
	$$m\ge \frac{q+1}{q+\theta},$$
	which, as $\theta$ goes to $1$, gives $m\ge 1$.
\end{remark}

\section{A \emph{priori} estimates}
\label{sec:apriori}

In order to prove Theorem \ref{teo_p>1} we work by approximation: we truncate the possibly singular function $h$ obtaining an approximated solution which will take to a solution $u_p$ in case $p>1$ and then, moving $p$ to one, one deduces the existence of a solution also in this limit case. The goal of this section is the introduction of the scheme of approximation and the proof of estimates which need to be independent of the level of truncation and of $p$. We underline that the solution $u$ found for $p=1$ is the one constructed from the scheme of approximation which takes to $u_p$; this is fundamental since, in general, there is no uniqueness of solutions when $1<p<N$ (see Theorem \ref{unip>1} below for a uniqueness result). Let $p>1$ and let us introduce the following scheme of approximation.     
\begin{equation}
\label{pbpn}
\begin{cases}
\dis -\Delta_p u_{n,k} + g_{k}(u_{n,k})= h_n(u_{n,k})f_n & \text{in}\;\Omega,\\
u_{n,k}=0 & \text{on}\;\partial\Omega,
\end{cases}
\end{equation}
where $g_{k}(s)=T_k(g(s))$ for $s\ge 0$ and $g_{k}(s)=0$ for $s<0$, $h_n(s)= T_n(h(s))$, $h_n(s)= h_n(0)$ for $s<0$ and $f_n=T_n(f)$. The existence of a weak solution $u_{n,k} \in W^{1,p}_0(\Omega)$ is guaranteed by \cite{ll} and, by standard Stampacchia's type theory, $u_{n,k}\in L^\infty(\Omega)$. Moreover taking $u_{n,k}^-$ as a test function in the weak formulation of \eqref{pbpn} one has that $u_{n,k}$ is nonnegative. Indeed
$$-\int_\Omega |\nabla u_{n,k}^-|^p + \int_\Omega g_{k}(u_{n,k}) u_{n,k}^- =  \int_{\Omega} h_n(u_{n,k})f_n u_{n,k}^- \ge 0,$$
which gives $u_{n,k}^- \equiv 0$ on $\Omega$ namely $u_{n,k}\ge 0$ almost everywhere in $\Omega$.
\\Now, for $t>0$, we take $G_t(u_{n,k})$ as a test function in the weak formulation of \eqref{pbpn} and dropping the nonnegative absorption term, one deduces 
$$\int_\Omega |\nabla G_t(u_{n,k})|^p \le \int h_n(u_{n,k})f_n G_t(u_{n,k}) \le \sup_{s\in [t,\infty)}[h(s)]  \int f_n G_t(u_{n,k}),$$
which is known to imply that $||u_{n,k}||_{L^\infty(\Omega)}\le C$ for some positive constant $C$ that is independent of $k$. 
\\We also observe that, taking $u_{n,k}$ as a test function in the weak formulation of \eqref{pbpn}, one deduces that $u_{n,k}$ is bounded in $W^{1,p}_0(\Omega)$ with respect to $k$ and it converges, as $k\to\infty$ and up to subsequences, to some function which we denote by $u_n$, solution to
\begin{equation}
\label{pbpn2}
\begin{cases}
\dis -\Delta_p u_{n} + g(u_{n})= h_n(u_{n})f_n & \text{in}\;\Omega,\\
u_{n}=0 & \text{on}\;\partial\Omega.
\end{cases}
\end{equation}
Hence our aim in this section is proving some estimates for $u_n$ pointing out the dependence on parameters $n,p$.
\begin{lemma}\label{lemmapriori0}
Let $h$ satisfy \eqref{h1} with $\gamma\le 1$, \eqref{h2}, and suppose that one of the following assumptions hold:
	\begin{itemize}
	\item[i)] $\theta\ge1 \text{ and } f\in L^1(\Omega)$;
	
	\item[ii)] $\theta<1,f\in L^{m}(\Omega)$ with $m>1$, and $g$ satisfies \eqref{g1}. 
\end{itemize}	
If $u_n$ is a solution to \eqref{pbpn2} then 
\begin{equation}\label{stimapriori0}
	||u_n||^p_{W^{1,p}_0(\Omega)} + ||g(u_n)u_n||_{L^{1}(\Omega)}\le C.
\end{equation}
   Moreover
   	\begin{equation}\label{lemstimal1}
    \int_{\Omega}h_n(u_n)f_n\varphi\le C+||\varphi||^p_{W^{1,p}_0(\Omega)} + C||\varphi||_{L^\infty(\Omega)},
    \end{equation}
    for all $\varphi\in W^{1,p}_0(\Omega)\cap L^\infty(\Omega)$. In both cases $C$ is a positive constant independent of $n$ and $p$.
\end{lemma}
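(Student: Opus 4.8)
The plan is to test the weak formulation of \eqref{pbpn2} with $u_n$ itself, which is legitimate since the uniform bound already established gives $u_n\in W^{1,p}_0(\Omega)\cap L^\infty(\Omega)$. This yields the energy identity
\[
\io |\nabla u_n|^p + \io g(u_n)u_n = \io h_n(u_n)f_n u_n ,
\]
so everything reduces to bounding the right-hand side by a constant (in case i)) or by a constant plus a controllable fraction of $\io g(u_n)u_n$ (in case ii)), with all constants independent of $n$ and $p$. The natural device is to split $\Omega$ according to the size of $u_n$: a region where $u_n$ is small, where \eqref{h1} and $\gamma\le1$ apply; a region where $u_n$ ranges over a fixed compact interval bounded away from $0$ and $\infty$, where $h$ is bounded by continuity; and a region $\{u_n>S\}$ where $u_n$ is large, where \eqref{h2} applies (and, in case ii), \eqref{g1} applies as well, with $S:=\max(\overline s,s_1)$).

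On the first region one has $h_n(u_n)u_n\le h(u_n)u_n\le \underline C\,u_n^{1-\gamma}\le \underline C\,\underline s^{\,1-\gamma}$, using that $\gamma\le1$ makes $t\mapsto t^{1-\gamma}$ nondecreasing; on the middle region $h_n(u_n)u_n$ is bounded by a constant times $S$. Hence these two contributions are controlled by $C\int_\Omega f_n\le C\|f\|_{L^1(\Omega)}$, which is finite since $\Omega$ is bounded and $f\in L^m(\Omega)$. In case i), on $\{u_n>S\}$ one uses $\theta\ge1$ to get $h_n(u_n)u_n\le \overline C\,u_n^{1-\theta}\le \overline C\,S^{\,1-\theta}$, so the whole right-hand side is $\le C$ and \eqref{stimapriori0} follows immediately (dropping, or keeping, the nonnegative term $\io g(u_n)u_n$ as needed).

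The heart of the argument is case ii). On $\{u_n>S\}$ one has simultaneously $h_n(u_n)u_n\le \overline C\,u_n^{1-\theta}$ with $1-\theta>0$ and $g(u_n)u_n\ge\nu\,u_n^{q+1}$ from \eqref{g1}. Young's inequality with conjugate exponents $\frac{q+1}{1-\theta}$ and $\frac{q+1}{q+\theta}$ gives
\[
\overline C\,f_n\,u_n^{1-\theta}\le \tfrac{\nu}{2}\,u_n^{q+1}+C\,f_n^{\frac{q+1}{q+\theta}}\le \tfrac12\,g(u_n)u_n+C\,f_n^{\frac{q+1}{q+\theta}},
\]
and the crucial observation is that the hypothesis $q\ge\frac{1-m\theta}{m-1}$ in \eqref{g1} is precisely equivalent to $\frac{q+1}{q+\theta}\le m$, so $\int_\Omega f_n^{\frac{q+1}{q+\theta}}\le\int_\Omega f^{\frac{q+1}{q+\theta}}\le C$ because $f\in L^m(\Omega)$ and $|\Omega|<\infty$. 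Inserting this back, the term $\tfrac12\io g(u_n)u_n$ is absorbed into the left-hand side and \eqref{stimapriori0} follows; since the exponents, the Young constant, the parameters $\nu,q,\theta,\underline C,\overline C,\underline s,\overline s,s_1$ and the norms of $f$ never involve $n$ or $p$, the bound is uniform. This absorption step — and verifying that \eqref{g1} is exactly the threshold that makes the exponent summable — is the only real obstacle; the rest is bookkeeping.

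Finally, for \eqref{lemstimal1} the plan is to test \eqref{pbpn2} with $\varphi$ (it suffices to treat $\varphi\ge0$, the general case following by replacing $\varphi$ with $\varphi^+$ since the right-hand sides only increase), obtaining
\[
\io h_n(u_n)f_n\varphi=\io |\nabla u_n|^{p-2}\nabla u_n\cdot\nabla\varphi+\io g(u_n)\varphi .
\]
The gradient term is handled pointwise by Young's inequality, $|\nabla u_n|^{p-1}|\nabla\varphi|\le|\nabla u_n|^p+|\nabla\varphi|^p$, so it is bounded by $\int_\Omega|\nabla u_n|^p+\|\varphi\|^p_{W^{1,p}_0(\Omega)}\le C+\|\varphi\|^p_{W^{1,p}_0(\Omega)}$ via \eqref{stimapriori0} (note this introduces no dependence on $p$). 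For the zeroth-order term one first bounds $\io g(u_n)\le |\Omega|\max_{[0,1]}g+\int_{\{u_n\ge1\}}g(u_n)u_n\le C$, again by \eqref{stimapriori0}, whence $\io g(u_n)\varphi\le C\|\varphi\|_{L^\infty(\Omega)}$. Combining the two estimates gives \eqref{lemstimal1} with a constant independent of $n$ and $p$.
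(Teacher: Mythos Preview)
Your proof is correct and follows essentially the same approach as the paper: test \eqref{pbpn2} with $u_n$, split $\Omega$ according to the size of $u_n$, use \eqref{h1}, continuity of $h$, and \eqref{h2} on the three regions, and in case ii) apply Young's inequality together with \eqref{g1} to absorb the tail term. The only noteworthy difference is that in case ii) the paper applies Young twice (first with exponents $(m,m')$ to separate $f$ from $u_n^{1-\theta}$, then again to raise $u_n^{\frac{(1-\theta)m}{m-1}}$ to $u_n^{q+1}$), whereas your single Young with exponents $\bigl(\tfrac{q+1}{1-\theta},\tfrac{q+1}{q+\theta}\bigr)$ achieves the same absorption in one step; your explicit argument that $\int_\Omega g(u_n)\le |\Omega|\max_{[0,1]}g+\int_{\{u_n\ge1\}}g(u_n)u_n$ also spells out a point the paper leaves implicit.
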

\begin{proof}
	Let us take $u_n$ as a test function in the weak formulation of \eqref{pbpn2}, obtaining
	\begin{equation}\label{priori01}
	\begin{aligned}
	\int_{\Omega} |\nabla u_n|^p + \int_{\Omega}g(u_n)u_n &= \int_{\Omega} h_n(u_{n})f_n u_n \le \underline{C}\int_{\{u_n< \underline{s}\}} f_n u_n^{1-\gamma}
	\\
	& + \int_{\{\underline{s}\le u_n \le \overline{s}\}} h_n(u_{n})f_n u_n + \overline{C}\int_{\{u_n> \overline{s}\}} f_n u_n^{1-\theta}.
	\end{aligned}
	\end{equation}
	Now in case i) we estimate \eqref{priori01} as follows
	\begin{equation*}
	\begin{aligned}
	\int_{\Omega} |\nabla u_n|^p + \int_{\Omega}g(u_n)u_n &= \int_{\Omega} h_n(u_{n})f_n u_n \le  \left(\underline{C}\underline{s}^{1-\gamma} + \max_{s\in [\underline{s},\overline{s}]} [h(s)s] + \overline{C}\overline{s}^{1-\theta}\right) ||f||_{L^1(\Omega)},
	\end{aligned}
	\end{equation*}
	which gives \eqref{stimapriori0}. In case ii) we apply the Young inequality on the right hand side of \eqref{priori01} which gives 
	\begin{equation}\label{priori1}
	\begin{aligned}
	\int_{\Omega} |\nabla u_n|^p + \int_{\Omega}g(u_n)u_n &\le  \left(\underline{C}\underline{s}^{1-\gamma} + \max_{s\in [\underline{s}, \overline{s}]} [h(s)s]\right)||f||_{L^1(\Omega)} + C_\varepsilon||f||^m_{L^m(\Omega)} 
	\\
	&+\varepsilon\int_{\{u_n> \overline{s}\}} u_n^{\frac{(1-\theta)m}{m-1}}.
	\end{aligned}
	\end{equation}
	Without loss of generality we suppose that $s_1\le  \overline{s}$. Hence, if $q= \frac{1-m\theta}{m-1}$, recalling \eqref{g1} and fixing $\varepsilon$ small enough then the proof simply follows. Otherwise, from \eqref{g1} and applying the Young inequality with indexes $\left(\frac{(q+1)(m-1)}{(1-\theta)m},\frac{(q+1)(m-1)}{(q+1)(m-1)-(1-\theta)m} \right)$ on the last term on the right hand side of \eqref{priori1}, one gets 
\begin{equation*}\label{priori2}
\begin{aligned}
\int_{\Omega} |\nabla u_n|^p + \nu \int_{\{u_n > s_1\}}u_n^{q+1} &\le \left(\underline{C}\underline{s}^{1-\gamma} + \max_{s\in [\underline{s}, \overline{s}]} [h(s)s]\right)||f||_{L^1(\Omega)}+ C_\varepsilon||f||^m_{L^m(\Omega)} 
\\
&+\frac{ \varepsilon(1-\theta)m}{(q+1)(m-1)} \int_{\{u_n> s_1\}}u_n^{q+1} + \frac{\varepsilon[(q+1)(m-1)-(1-\theta)m]}{(q+1)(m-1)}|\Omega|,
\end{aligned}
\end{equation*}
and then
	\begin{equation*}
	\begin{aligned}
	\int_{\Omega} |\nabla u_n|^p + \left(\nu-\frac{ \varepsilon(1-\theta)m}{(q+1)(m-1)}\right)\int_{\{u_n> s_1\}}u_n^{q+1} &\le \left(\underline{C}\underline{s}^{1-\gamma} + \max_{s\in [\underline{s}, \overline{s}]} [h(s)s]\right)||f||_{L^1(\Omega)}
	\\
	& + C_\varepsilon||f||^m_{L^m(\Omega)} + \frac{\varepsilon[(q+1)(m-1)-(1-\theta)m]}{(q+1)(m-1)}|\Omega|,
	\end{aligned}
	\end{equation*}
	which, fixing $\varepsilon$ small enough and coupling with \eqref{priori1}, implies \eqref{stimapriori0}.
	Once that \eqref{stimapriori0} holds we take $\varphi\in W^{1,p}_0(\Omega)\cap L^\infty(\Omega)$ as a test function in the weak formulation of \eqref{pbpn2} and we estimate its right hand side as  
	\begin{equation*}\label{stimal1}
	\begin{aligned}
	\int_{\Omega}h_n(u_n)f_n\varphi &\le \frac{p-1}{p}\int_{\Omega}|\nabla u_n|^p + \frac{1}{p}\int_{\Omega}|\nabla \varphi|^p + \int_{\Omega}g(u_n)\varphi
	\\
	&\le ||u_n||^p_{W^{1,p}_0(\Omega)}+ ||\varphi||^p_{W^{1,p}_0(\Omega)} + ||g(u_n)||_{L^1(\Omega)}||\varphi||_{L^\infty(\Omega)}.
	\end{aligned}
	\end{equation*} 
	Therefore \eqref{lemstimal1} follows by \eqref{stimapriori0}. This concludes the proof.
\end{proof}
\begin{remark}	
	We explicitly highlight that in case $\theta \ge 1$ the regularizing effect given by the right hand side of \eqref{pbpn2} is so strong that we are not actually taking advantage of the absorption term at the Sobolev level. Instead its effect is clearly evident in the Lebesgue regularity of the solution. 
\end{remark}

\section{Proof of Theorem \ref{teo_p>1}}
\label{sec:exi}

\subsection{The case $p>1$}
\label{sec:p>1}

In this section we prove Theorem \ref{teo_p>1} in case $p>1$ through the scheme of approximation \eqref{pbpn2} and the estimates obtained in Section \ref{sec:apriori}.
\begin{proof}[Proof of Theorem \ref{teo_p>1} in case $p>1$]
	Let $u_n$ be a solution to \eqref{pbpn2}, then it follows from Lemma \ref{lemmapriori0} that it is bounded in $W^{1,p}_0(\Omega)$ with respect to $n$.
	Hence there exists  a function $u_p\in W^{1,p}_0(\Omega)$ such that $u_n$, up to subsequences, converges to $u_p$ in $L^{r}(\Omega)$ for all $r<\frac{pN}{N-p}$ and weakly in $W^{1,p}_0(\Omega)$. Lemma \ref{lemmapriori0} also gives that $h_n(u_n)f_n$ is bounded in $L^1_{\rm loc}(\Omega)$ and, clearly, $g(u_n)$ is bounded in $L^1(\Omega)$ with respect to $n$. Hence one can apply Theorem $2.1$ of \cite{bm} which gives that $\nabla u_n$ converges to $\nabla u_p$ almost everywhere in $\Omega$. We also underline that an application of the Fatou Lemma with respect to $n$ in \eqref{stimapriori0} allows to deduce that $g(u_p)u_p\in L^1(\Omega)$.
	\\Now we prove that $u_p$ satisfies \eqref{def_distr} by passing to the limit in $n$ every term in the weak formulation of \eqref{pbpn2}. We can easily pass to the limit the first term in \eqref{pbpn2} with respect to $n$; hence we focus on the absorption term $g$, which we show to be equi-integrable. Indeed if we test \eqref{pbpn2} with $\displaystyle S_{\eta,k}(u_n)$ (defined in \eqref{Sdelta}) where $\eta,k>0$ and we deduce
	\begin{equation*}
		\int_{\Omega} |\nabla u_n|^{p} S'_{\eta,k}(u_n) + \int_{\Omega} g(u_n) S_{\eta,k}(u_n) \le \sup_{s\in [k,\infty)}[h(s)]\int_{\Omega} f_n S_{\eta,k}(u_n),
	\end{equation*}
	which, observing that the first term on the left hand side is nonnegative and taking the limit with respect to $\eta \to 0$, implies 
	\begin{equation}\label{equi}
	\int_{\{u_n\ge k\}} g(u_n) \le \sup_{s\in [k,\infty)}[h(s)]\int_{{\{u_n\ge k\}} } f_n,
	\end{equation}	
	which, since $f_n$ converges to $f$ in $L^m(\Omega)$, easily implies that $g(u_n)$ is equi-integrable and so it converges to $g(u_p)$ in $L^1(\Omega)$. This is sufficient to pass to the limit in the second term of the weak formulation of \eqref{pbpn2}.
	\\Then in order to conclude the proof of the theorem we just need to treat the right hand side of \eqref{pbpn2}. If $h(0)<\infty$ then we can simply pass to the limit through the Lebesgue Theorem and the proof is done. This means that, without loss of generality, we assume $h(0)=\infty$ for the rest of the proof. From now we consider a nonnegative $\varphi\in W^{1,p}_0(\Omega)\cap L^\infty(\Omega)$. An application of the Fatou Lemma in \eqref{lemstimal1} with respect to $n$ gives   
	\begin{equation}\label{lemstimal1fatou}
	\int_{\Omega}h(u_p)f\varphi\le C,
	\end{equation}
	where $C$ does not depend on $n$. Moreover, from \eqref{lemstimal1fatou} one deduces that, up to a set of zero Lebesgue measure,
	\begin{equation}\label{upos}
	\{u_p = 0\} \subset \{f = 0\}. 
	\end{equation}
 Now, for $\delta >0$, we split the right hand side of \eqref{pbpn2} as 
\begin{equation}\label{rhs}
\int_{\Omega}h_n(u_n)f_n\varphi = \int_{\{u_n\le \delta\}}h_n(u_n)f_n\varphi + \int_{\{u_n> \delta\}}h_n(u_n)f_n\varphi,
\end{equation}
   and we pass to the limit first as $n\to \infty$ and then as $\delta\to 0$. We remark that we need to choose $\delta\not\in \{\eta: |\{u_p=\eta \}|>0\}$ which is at most a countable set.
	For the second term of \eqref{rhs} we have 
	$$h_n(u_n)f_n\varphi\chi_{\{u_n> \delta\}} \le \sup_{s\in [\delta,\infty)}[h(s)]\ f\varphi \in L^1(\Omega),$$
	which permits to apply the Lebesgue Theorem with respect to $n$. Hence one has
	\begin{equation*}\label{rhs2}
	\lim_{n\to \infty}\int_{\{u_n> \delta\}}h_n(u_n)f_n\varphi= \int_{\{u_p> \delta\}}h(u_p)f\varphi.
	\end{equation*}
	Moreover it follows by \eqref{lemstimal1fatou} that 
	$$h(u_p)f\varphi\chi_{\{u_p> \delta\}} \le h(u_p)f\varphi \in L^1(\Omega),$$
	and then, once again by the Lebesgue Theorem, one gets
	\begin{equation}\label{rhs21}
\lim_{\delta\to 0}\lim_{n\to \infty}\int_{\{u_n> \delta\}}h_n(u_n)f_n\varphi= \int_{\{u_p> 0\}}h(u_p)f\varphi.
\end{equation}	
Now in order to get rid of the first term of the right hand side of \eqref{rhs}, we take $V_{\delta,\delta}(u_n)\varphi$ ($V_{\delta,\delta}(s)$ is defined in \eqref{Vdelta}) as test function in the weak formulation of \eqref{pbpn2}, obtaining (recall $V'_{\delta,\delta}(s)\le 0$ for $s\ge 0$)
\begin{equation*}\label{limn1}
\begin{aligned}
\int_{\{u_n\le \delta\}}h_n(u_n)f_n\varphi\le \int_{\Omega}h_n(u_n)f_nV_{\delta,\delta}(u_n)\varphi \le \int_{\Omega}|\nabla u_n|^{p-2}\nabla u_n\cdot \nabla \varphi V_{\delta,\delta}(u_n) + \int_{\Omega}g(u_n)V_{\delta,\delta}(u_n)\varphi,
\end{aligned}
\end{equation*}
and then, as $n\to\infty$, by the weak convergence and by the Lebesgue Theorem, one gets
\begin{equation*}\label{limn2}
\begin{aligned}
\limsup_{n\to\infty}\int_{\{u_n\le \delta\}}h(u_n)f_n\varphi\le \int_{\Omega}|\nabla u_p|^{p-2}\nabla u_p\cdot \nabla \varphi V_{\delta,\delta}(u_p) + \int_{\Omega}g(u_p)V_{\delta,\delta}(u_p)\varphi.
\end{aligned}
\end{equation*}
We can now take the limit in $\delta\to 0$, obtaining (recall $g(0)=0$)
\begin{equation}\label{limn3}
\begin{aligned}
\lim_{\delta\to 0}\limsup_{n\to\infty}\int_{\{u_n\le \delta\}}h(u_n)f_n\varphi\le \int_{\{u_p=0\}}|\nabla u_p|^{p-2}\nabla u_p\cdot \nabla \varphi + \int_{\{u_p=0\}}g(u_p)\varphi = 0.
\end{aligned}
\end{equation}
Hence \eqref{rhs21}, \eqref{limn3} and \eqref{upos} imply that 
\begin{equation*}
\lim_{n\to \infty}\int_{\Omega}h_n(u_n)f_n\varphi = \int_{\{u_p>0\}}h(u_p)f\varphi = \int_{\Omega}h(u_p)f\varphi.
\end{equation*}
Therefore we have proved that there exists $u_p\in W^{1,p}_0(\Omega)$ such that, for all nonnegative $\varphi\in W^{1,p}_0(\Omega)\cap L^\infty(\Omega)$, it holds
\begin{equation}\label{eqp>1}
\int_{\Omega}|\nabla u_p|^{p-2}\nabla u_p \cdot \nabla \varphi + \int_\Omega g(u_p)\varphi =  \int_{\Omega}h(u_p)f\varphi,
\end{equation}
whence one deduces \eqref{def_distr}. This concludes the proof.
\end{proof}

\begin{remark}\label{set}
	From the previous proof we get the existence of a solution $u_p\in W^{1,p}_0(\Omega)$ to \eqref{pb1} where $p>1$ such that $g(u_p)u_p\in L^1(\Omega)$ and satisfying the weak formulation for a class of test functions larger than the one requested in \eqref{def_distr}, namely $\varphi\in W^{1,p}_0(\Omega)\cap L^\infty(\Omega)$. This is consistent with the proof of the uniqueness Theorem \ref{unip>1} below where we show that if $u_p\in W^{1,p}_0(\Omega)$ satisfies \eqref{def_distr} and the absorption term $g(u_p)$ is integrable in $\Omega$ then the set of test functions can be enlarged through a density argument.  
	\\Finally we also underline that the request $g(0)=0$ is only employed in \eqref{limn3}. The same conclusion can be deduced if, for instance, $f>0$ almost everywhere in $\Omega$ and $g(0)> 0$. Indeed, if this is the case, \eqref{upos} gives that $u_p>0$ almost everywhere in $\Omega$ and then the right hand side of \eqref{limn3} is still zero. 
\end{remark}

\subsection{The case $p=1$}
\label{sec:p1}

Here we prove Theorem \ref{teo_p>1} when $p=1$. We underline that, from here on, $u_p$ is the solution found in the previous section; i.e. $u_p$ solves
\begin{equation}
\label{pbp}
\begin{cases}
\dis -\Delta_p u_p + g(u_p)= h(u_p)f & \text{in}\;\Omega,\\
u_p=0 & \text{on}\;\partial\Omega,
\end{cases}
\end{equation}
where $1<p<N$ and it is obtained through the scheme of approximation \eqref{pbpn2} whence we will deduce most of estimates; finally we also highlight that $u_p$ solves \eqref{eqp>1} which is a slight more general formulation than the one given in \eqref{def_distr} (see also Remark \ref{set}). Hence our goal becomes moving $p\to 1$ in problem \eqref{pbp}. 
\begin{proof}[Proof of Theorem \ref{teo_p>1} in case $p=1$]
Let $u_p$ be the solution to \eqref{pbp} found in the proof of Theorem \ref{teo_p>1} in case $p>1$. First we observe that from the Young inequality and from the weak lower semicontinuity of the norm with respect to $n$ in \eqref{stimapriori0}, one gets
\begin{equation}
\begin{aligned}\label{bv}
\int_{\Omega} |\nabla u_p| \le \int_{\Omega} |\nabla u_p|^p  + \frac{p-1}{p} |\Omega|\le C,		   
\end{aligned}	
\end{equation}			
for some constant $C$ which does not depend on $p$. This means that $u_p$ is bounded in $BV(\Omega)$ with respect to $p$ (recall that the Sobolev trace of $u_p$ is zero) and therefore one can deduce the existence of a function $u\in BV(\Omega)$ such that $u_p$ (once again up to subsequences) converges to $u$ in $L^{r}(\Omega)$ with $r<\frac{N}{N-1}$ and $\nabla u_p$ converges to $Du$ $*$-weakly as measures as $p$ tends to $1$. Now that we have our candidate to be a solution we need to prove that \eqref{def_distrp=1}, \eqref{def_zp=1}, and \eqref{def_bordop=1} hold. We proceed by steps.

\medskip

	{\it Existence of the field $z$.}
  
From \eqref{bv} and from the H\"older inequality one has that, for $1\le q<\frac{p}{p-1}$, 
	\begin{align}\label{stimaz}
		|||\nabla u_p|^{p-2}\nabla u_p||_{L^q(\Omega)^N} \le \left(\int_\Omega |\nabla u_p|^p\right)^{\frac{p-1}{p}}|\Omega|^{\frac{1}{q}-\frac{p-1}{p}}\leq C^{\frac{p-1}{p}}|\Omega|^{\frac{1}{q}-\frac {p-1}{p}},		
	\end{align}
	whence one deduces the existence of a vector field $z_q\in L^q(\Omega)^N$ such that $|\nabla u_p|^{p-2}\nabla u_p$ converges weakly to $z_q$ in $L^q(\Omega)^N$;  then a standard  diagonal argument assures the existence of a unique vector field $z$, defined independently of $q$, such that $|\nabla u_p|^{p-2}\nabla u_p$ converges weakly to $z$ in $L^q(\Omega)^N$ for any $q<\infty$.	Moreover as $p\to 1$ then by weakly lower semicontinuity in \eqref{stimaz} one has $||z||_{L^q(\Omega)^N}\le |\Omega|^{\frac1q}$ for any $q<\infty$ and if $q\to \infty$ we also have $||z||_{L^\infty(\Omega)^N}\le 1$.

	\bigskip 

	{\it $u$ satisfies the distributional formulation \eqref{def_distrp=1}.}\\
	First of all we observe that if we let $n\to\infty$ in \eqref{equi} we still have that $g(u_p)$ is equi-integrable with respect to $p$, and so $g(u_p)$ converges to $g(u)$ in $L^1(\Omega)$. Moreover, applying the Fatou Lemma to \eqref{stimapriori0} first in $n$ and then in $p$, one gets $g(u)u\in L^1(\Omega)$. Now if $h(0)<\infty$ then we can simply pass to the limit the weak formulation \eqref{pbp} which also gives that $z\in \DM(\Omega)$. Hence without loss of generality we assume $h(0)=\infty$.  From here, if not explicitly stated, $\varphi$ will be a nonnegative function in $C^1_{c}(\Omega)$. We take $\varphi$ itself as a test in the weak formulation of \eqref{pbp} and by the Fatou Lemma, as $p\to 1$, one has
	\begin{equation}\label{dmloc}
		\int_{\Omega} z\cdot \nabla \varphi + \int_\Omega g(u)\varphi\ge \int_\Omega h(u)f\varphi \ge 0.
	\end{equation} 	
	Since the left hand side of the previous is finite we have that $h(u)f \in L^1_{\rm loc}(\Omega)$, which also implies (up to a set of zero Lebesgue measure)
	\begin{equation}\label{u0f0}
	 \{u=0\}\subset\{f=0\}.
	\end{equation}
	Moreover from \eqref{dmloc} we also deduce that $z\in \mathcal{DM}^\infty_{\rm loc}(\Omega)$. 
	Now we test the weak formulation of \eqref{pbp} with  $S_{\delta,\delta}(u_p)\varphi$ ($S_{\delta,\delta}$ is defined in \eqref{Sdelta}) and using also the Young inequality we obtain
	\begin{equation}
		\begin{aligned}\label{minore2}
			&\int_{\Omega} |\nabla S_{\delta,\delta}(u_p)|\varphi + \int_{\Omega}|\nabla u_p|^{p-2} \nabla u_p\cdot \nabla \varphi S_{\delta,\delta}(u_p) + \int_\Omega g(u_p)S_{\delta,\delta}(u_p)\varphi
			\\ 
			&\le \frac{1}{p}\int_{\Omega} |\nabla u_p|^p S'_{\delta,\delta}(u_p)\varphi + \frac{p-1}{p}\int_{\Omega} S'_{\delta,\delta}(u_p)\varphi + \int_{\Omega}|\nabla u_p|^{p-2} \nabla u_p\cdot \nabla \varphi S_{\delta,\delta}(u_p)
			\\ 
			&+ \int_\Omega g(u_p)S_{\delta,\delta}(u_p)\varphi \le \frac{p-1}{p}\int_{\Omega}  S'_{\delta,\delta}(u_p)\varphi +  \int_{\Omega}h(u_p)fS_{\delta,\delta}(u_p)\varphi.
		\end{aligned} 	
	\end{equation}
	We observe that $|\nabla S_{\delta,\delta}(u_p)| \le\frac1\delta |\nabla u_p|$ and then \eqref{bv} implies that $S_{\delta,\delta}(u_p)$ is bounded in $BV(\Omega)$ with respect to $p$. Hence, as $p$ tends to $1$, one can apply lower semicontinuity for the first term on the left hand side of \eqref{minore2} while for the second and third term we observe that $S_{\delta,\delta}(u_p)$ converges to $S_{\delta,\delta}(u)$ $*$-weakly in $L^\infty(\Omega)$. For the right hand side we have that the first term goes to zero as $S'_{\delta,\delta}$ is bounded and for the second term we can apply the Lebesgue Theorem ($S_{\delta,\delta}\le 1$). Thus one gets
	\begin{align*}
		\int_{\Omega} \varphi|D S_{\delta,\delta}(u)| + \int_{\Omega}z\cdot \nabla \varphi S_{\delta,\delta}(u) + \int_{\Omega}g(u)S_{\delta,\delta}(u)\varphi \le \int_{\Omega}h(u)fS_{\delta,\delta}(u)\varphi.
	\end{align*}
	Now we easily see that the second, the third and the fourth terms are bounded in $\delta$, which means that $S_{\delta,\delta}(u)$ is bounded in $BV_{\rm loc}(\Omega)$ with respect to $\delta$. Now taking $\delta\to 0$ and by lower semicontinuity for the first term on the left hand side we get	
	$$
	\int_{\Omega} \varphi|D \chi_{\{u>0\}}| + \int_{\Omega}z\cdot \nabla \varphi \chi_{\{u>0\}} + \int_{\Omega}g(u)\varphi \le \int_{\Omega}h(u)f\chi_{\{u>0\}}\varphi\stackrel{\eqref{u0f0}}{=}\int_{\Omega}h(u)f\varphi.
	$$				
	and hence (recall that $||z||_{L^\infty(\Omega)^N}\le 1$) one has that
	\begin{align}\label{minore4bis}
		-\int_{\Omega}\chi^*_{\{u>0\}}\varphi\operatorname{div}z  + \int_{\Omega}g(u)\varphi \le \int_{\Omega}h(u)f\varphi.
	\end{align} 
	Now set in \eqref{dmloc} $\varphi = (\rho_\epsilon*\chi_{\{u>0\}})\phi$ where $0\le \phi \in C^1_c(\Omega)$ and $\rho_\epsilon$ is a mollifier. Then as $\epsilon\to 0$ it follows that
	\begin{equation}\label{maggiore}
		-\int_{\Omega}\chi^*_{\{u>0\}}\phi\operatorname{div}z + \int_{\Omega} g(u) \phi \ge \int_{\Omega}h(u)f\chi_{\{u>0\}}\phi =	\int_{\Omega}h(u)f\phi \ \ \ \forall\phi \in C^1_c(\Omega), \ \ \phi \ge 0,
	\end{equation}
	where the last equality is deduced by \eqref{u0f0}.
	Hence \eqref{minore4bis} and \eqref{maggiore} give \eqref{def_distrp=1}.
	
	\bigskip

		{\it Identification of the field (i.e. proof of \eqref{def_zp=1}).} \\
	We first take $T_k(u_p)\varphi$ as a test function in the weak formulation of \eqref{pbp} obtaining
	\begin{equation*}	
	\int_{\Omega} |\nabla T_k(u_p)|^{p}\varphi + \int_{\Omega} T_k(u_p)|\nabla u_p|^{p-2}\nabla u_p \cdot \nabla \varphi + \int_{\Omega} g(u_p)T_k(u_p) \varphi= \int_{\Omega}  h(u_p)f T_k(u_p)\varphi,	
	\end{equation*}
	and then it follows from the Young inequality that
	\begin{equation}
	\begin{aligned}\label{z_1}
	\int_{\Omega} |\nabla T_k(u_p)|\varphi &+\int_{\Omega} T_k(u_p)|\nabla u_p|^{p-2}\nabla u_p \cdot \nabla \varphi + \int_{\Omega} g(u_p)T_k(u_p) \varphi 
	\\ 
	&\le \int_{\Omega}  h(u_p)f T_k(u_p) \varphi + \frac{p-1}{p}\int_{\Omega}\varphi.	
	\end{aligned}
	\end{equation} 
	Now we pass to the limit with respect to $k\to \infty$ in \eqref{z_1}; precisely we use the weak lower semicontinuity for the first term while for the second term we employ the strong convergence of $T_k(u_p)$ to $u_p$ in $W^{1,p}_0(\Omega)$. Moreover since $g(u_p)u_p\in L^1(\Omega)$ one can apply the Lebesgue Theorem in the remaining term on the left hand side, yielding to  
	\begin{equation}
	\begin{aligned}\label{z_2}
	\int_{\Omega} |\nabla u_p|\varphi &+\int_{\Omega} u_p|\nabla u_p|^{p-2}\nabla u_p \cdot \nabla \varphi + \int_{\Omega} g(u_p)u_p \varphi 
	\\ 
	&\le \int_{\Omega}  h(u_p)f u_p \varphi + \frac{p-1}{p}\int_{\Omega}\varphi.	
	\end{aligned}
	\end{equation}
 	Now we focus on the right hand side of \eqref{z_2}; if $\theta\ge 1$ we can simply pass to the limit as $p\to1$ by the Lebesgue Theorem. Otherwise we take $\delta>\overline{s}: \ \delta\not\in\{\eta: |\{u=\eta \}|>0\}$ and write
	$$\int_{\Omega}  h(u_p)f u_p \varphi = \int_{\{u_p\le \delta\}}  h(u_p)f u_p \varphi + \int_{\{u_p> \delta\}}  h(u_p)f u_p  \varphi,$$
	where we are allowed to pass to the limit as $p\to 1$ by the Lebesgue Theorem for the first term on the right hand side and by weak convergence for the second term. Indeed, since by Lemma \ref{lemmapriori0} $u_p^{1-\theta}$ is bounded in $L^{\frac{m}{m-1}}(\Omega)$, for the second term one has that $h(u_p)u_p\chi_{\{u_p> \delta\}}$ converges weakly to $h(u)u\chi_{\{u> \delta\}}$ in $L^{\frac{m}{m-1}}(\Omega)$.
	As concerns the left hand side of \eqref{z_2} we apply weak lower semicontinuity for the first term, the Fatou Lemma for the third term and finally the second term easily 
	passes to the limit. We get
	\begin{equation}
	\begin{aligned}\label{z_3}
	\int_{\Omega} \varphi|D u| +\int_{\Omega} uz \cdot \nabla \varphi + \int_{\Omega} g(u)u \varphi \le \int_{\Omega}  h(u)f u \varphi.	
	\end{aligned}
	\end{equation}	
	Now in order to manage the right hand side of \eqref{z_3} we prove that the following holds
	\begin{equation}\label{peru}	
		-u^* \operatorname{div}z +g(u)u = h(u)fu \ \text{ as measures in $\Omega$.}
	\end{equation}
	Indeed one can take in \eqref{dmloc} $(\rho_\epsilon\ast T_k(u))\varphi$ $(k>0)$ as a test function where $\rho_\epsilon$ is a sequence of standard mollifier, deducing
	\begin{equation*}\label{eqsigma}
	-\int_\Omega (\rho_\epsilon\ast T_k(u)) \varphi\, \operatorname{div}z + \int_\Omega g(u)(\rho_\epsilon\ast T_k(u))\varphi \ge  \int_\Omega h(u)f(\rho_\epsilon\ast T_k(u))\varphi,
	\end{equation*}
	and, as $\epsilon \to 0$, observing that $T_k(u)\in BV(\Omega)\cap L^\infty(\Omega)$ for the left hand side while applying the Fatou Lemma for the right hand side, one gets  
	\begin{equation}\label{eq1bis}
	-\int_\Omega T_k(u)^*\varphi\, \operatorname{div}z + \int_\Omega g(u)T_k(u)\varphi \ge  \int_\Omega h(u)fT_k(u)\varphi.
	\end{equation}	
	Moreover, since $g(u)u, h(u)fu \in L^1(\Omega)$ and $u$ satisfies \eqref{def_distrp=1}, one has from Lemma \ref{lempairing} that $u^*\in L^1_{\rm loc}(\Omega,\operatorname{div}z)$ and this allows to pass to the limit \eqref{eq1bis} with respect to $k$, taking to 
	\begin{equation}\label{eq1}
	-\int_\Omega u^*\varphi\, \operatorname{div}z + \int_\Omega g(u)u\varphi \ge \int_\Omega h(u)fu\varphi.
	\end{equation}	
	Now in order to show the reverse inequality we observe that, recalling $(z,Du)\le |Du|$ since $||z||_{L^\infty(\Omega)^N}\le 1$, \eqref{z_3} takes to 
	\begin{equation}
	\begin{aligned}\label{z_3bis}
	-\int_{\Omega}  u^* \varphi \operatorname{div}z + \int_{\Omega} g(u)u \varphi = \int_{\Omega} \varphi (z,Du) +\int_{\Omega} uz \cdot \nabla \varphi + \int_{\Omega} g(u)u \varphi \le \int_{\Omega}  h(u)f u \varphi,	
	\end{aligned}
	\end{equation}
	and then \eqref{eq1} and \eqref{z_3bis} imply \eqref{peru}. 
	Hence, using \eqref{peru} in \eqref{z_3}, one obtains
	\begin{equation*}
	\begin{aligned}\label{z_4}
	\int_{\Omega} \varphi|D u| +\int_{\Omega} uz \cdot \nabla \varphi \le -\int_\Omega u^*\varphi\, \operatorname{div}z,	
	\end{aligned}
	\end{equation*}
	that is
	\begin{equation}\label{primoverso}
	\int_{\Omega} \varphi|D u| \le \int_{\Omega}\varphi (z, D u),  \ \ \ \forall \varphi\in C^1_c(\Omega), \ \ \varphi \ge 0.	
	\end{equation}	
	Therefore \eqref{primoverso} gives \eqref{def_zp=1} since, as already observed, the reverse inequality is trivial.
	
	\medskip
	{\it Attainability of the boundary datum (i.e. proof of \eqref{def_bordop=1}).}	\\
	We take $T_k(u_p)$ as a test function in the weak formulation of \eqref{pbp} (recall that $u_p$ has zero trace on the boundary) 
	\begin{equation*}
	\int_{\Omega} |\nabla T_k(u_p)|^p + \int_{\Omega}g(u_p)T_k(u_p) + \int_{\partial \Omega}T_k(u_p) d\mathcal{H}^{N-1}\le  \int_{\Omega}  h(u_p)f T_k(u_p),
	\end{equation*}	
	and then it follows by the Young inequality that  
	\begin{equation*}\label{bordo1}
	\int_{\Omega} |\nabla T_k(u_p)| + \int_{\Omega}g(u_p)T_k(u_p) + \int_{\partial \Omega}T_k(u_p) d\mathcal{H}^{N-1} \le  \int_{\Omega}  h(u_p)f T_k(u_p) +\frac{p-1}{p}|\Omega|.
	\end{equation*}		
	We can take $p\to 1$ having
	\begin{equation*}
	\int_{\Omega} |D T_k(u)| + \int_{\partial \Omega}T_k(u) d\mathcal{H}^{N-1} \le  \int_{\Omega}  h(u)f T_k(u) - \int_{\Omega}g(u)T_k(u) = -\int_{\Omega}T_k(u)^*\operatorname{div}z,
	\end{equation*}	
	where the last equality follows from \eqref{eq1bis} and by taking $p\to 1$ in \eqref{z_1} (recall once again that $(z, DT_k(u))\le |DT_k(u)|$ and that $h(u)fT_k(u)\in L^1(\Omega)$).
    Now if one applies Lemma \ref{green} then 
    \begin{equation*}
	\int_{\Omega} |D T_k(u)| + \int_{\partial \Omega}T_k(u) d\mathcal{H}^{N-1} \le  \int_{\Omega}(z,D T_k(u)) - \int_{\partial \Omega} [T_k(u) z,\nu]d\mathcal{H}^{N-1},
	\end{equation*}	
	which gives the desired result since
    \begin{equation}\label{troncata}
	\int_{\Omega} |D T_k(u)| =  \int_{\Omega}(z,D T_k(u)).
	\end{equation}	
	Indeed one has 
	\begin{equation*}\label{theta}
	(z, DT_k(u)) = \lambda(z,DT_k(u),x) \, {|DT_k(u)|},
	\end{equation*}
	where \(\lambda(z, DT_k(u), \cdot)\) denotes the Radon-Nikodym derivative of $(z, DT_k(u))$ with respect to $|DT_k(u)|$. 
	Then it follows from Proposition $4.5$ of \cite{CDC} that
	\begin{equation*}
	\lambda(z,DT_k(u),x)=\lambda(z,Du,x),
	\qquad \text{for \(|DT_k(u)|\)-a.e.}\ x\in\Omega.
	\end{equation*}
	Moreover from \eqref{def_zp=1}
	\begin{equation*}\label{radonnik}
	\lambda(z,Du,x)=1,
	\qquad \text{for \(|Du|\)-a.e.}\ x\in\Omega,
	\end{equation*}
	and then we deduce 
	\begin{equation*}\label{radonnik2}
	\lambda(z,DT_k(u),x)=1,
	\qquad \text{for \(|DT_k(u)|\)-a.e.}\ x\in\Omega,
	\end{equation*}
	since $|DT_k(u)|$ is an absolutely continuous measure with respect to $|D u|$. This means that \eqref{troncata} holds. The proof is concluded.		
\end{proof}

\section{Uniqueness}
\label{sec:uni}

In this section we show that if a solution $u$ to \eqref{pb1} has finite energy and the absorption term $g$ is a non-decreasing function satisfying $g(u) \in L^1(\Omega)$ then the solution is unique provided that $h$ is decreasing (just non-increasing when $p>1$).
\subsection{The case $p>1$}
\begin{theorem}\label{unip>1}
	Let $1<p<N$, let $h$ be non-increasing, and let $g$ be non-decreasing then there is at most one solution $u_p\in W^{1,p}_0(\Omega)$ to problem \eqref{pb1} such that $g(u_p)\in L^1(\Omega)$. 
\end{theorem}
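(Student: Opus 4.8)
The plan is to take two solutions $u_1,u_2\in W^{1,p}_0(\Omega)$ of \eqref{pb1} in the sense of Definition \ref{weakdefp>1} with $g(u_1),g(u_2)\in L^1(\Omega)$, and to show $u_1=u_2$ by inserting a truncation of $u_1-u_2$ as test function in the difference of the two equations, exploiting the strict monotonicity of $\xi\mapsto|\xi|^{p-2}\xi$ together with the monotonicity of $g$ and $h$. A preliminary step is to \emph{enlarge the class of admissible test functions}: since $g(u_i)\in L^1(\Omega)$, one checks (arguing as in the proof of Lemma \ref{lemmal1}, and consistently with Remark \ref{set}) that each $u_i$ satisfies \eqref{def_distr} — equivalently \eqref{eqp>1} — for every $\varphi\in W^{1,p}_0(\Omega)\cap L^\infty(\Omega)$. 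One first passes to compactly supported functions in $W^{1,p}_0(\Omega)\cap L^\infty(\Omega)$ (for which $h(u_i)f\varphi\in L^1(\Omega)$ because $h(u_i)f\in L^1_{\rm loc}(\Omega)$), then tests with functions $\varphi_n=\varphi\wedge\phi_n$, where $\phi_n\in C^1_c(\Omega)$, $\phi_n\to\varphi$ in $W^{1,p}_0(\Omega)$, so that $0\le\varphi_n\le\varphi$ and
$$\int_\Omega h(u_i)f\varphi_n=\int_\Omega|\nabla u_i|^{p-2}\nabla u_i\cdot\nabla\varphi_n+\int_\Omega g(u_i)\varphi_n\le\|\nabla u_i\|_{L^p(\Omega)^N}^{p-1}\,\|\nabla\varphi_n\|_{L^p(\Omega)^N}+\|g(u_i)\|_{L^1(\Omega)}\|\varphi\|_{L^\infty(\Omega)}\le C;$$
Fatou's lemma then gives $h(u_i)f\varphi\in L^1(\Omega)$ for nonnegative $\varphi$, and dominated convergence lets one pass to the limit. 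The general case follows by splitting $\varphi=\varphi^+-\varphi^-$.

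With the enlarged formulation at hand, $\varphi=T_k(u_1-u_2)$ ($k>0$) is admissible in both equations, and subtracting them yields
\begin{align*}
&\int_\Omega\big(|\nabla u_1|^{p-2}\nabla u_1-|\nabla u_2|^{p-2}\nabla u_2\big)\cdot\nabla T_k(u_1-u_2)+\int_\Omega\big(g(u_1)-g(u_2)\big)T_k(u_1-u_2)\\
&\qquad=\int_\Omega\big(h(u_1)-h(u_2)\big)f\,T_k(u_1-u_2).
\end{align*}
The first integrand equals $\chi_{\{|u_1-u_2|<k\}}\big(|\nabla u_1|^{p-2}\nabla u_1-|\nabla u_2|^{p-2}\nabla u_2\big)\cdot(\nabla u_1-\nabla u_2)\ge0$ by strict monotonicity of $\xi\mapsto|\xi|^{p-2}\xi$; the second is $\ge0$ since $g$ is non-decreasing and $T_k$ preserves the sign; and the right-hand side is $\le0$ since $h$ is non-increasing, $f\ge0$, and on the sets where $u_1$ or $u_2$ vanishes one has $f=0$ as well (cf. \eqref{upos}), so that the corresponding contribution is zero. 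Hence all three integrals vanish; the vanishing of the first one forces $\nabla u_1=\nabla u_2$ a.e. on $\{|u_1-u_2|<k\}$ for every $k>0$, i.e. $\nabla(u_1-u_2)\equiv0$ in $\Omega$, and since $u_1-u_2\in W^{1,p}_0(\Omega)$, the Poincaré inequality gives $u_1=u_2$.

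The main obstacle is precisely the enlargement of the test-function class in the presence of the (possibly singular) right-hand side: $h(u_i)f$ is a priori only locally integrable, so $T_k(u_1-u_2)$ cannot be plugged in directly and the global integrability $h(u_i)f\,T_k(u_1-u_2)\in L^1(\Omega)$ must be recovered a posteriori from the a priori control provided by the diffusion and absorption terms — which is what forces one to work with test functions squeezed between $0$ and $\varphi$ in order to apply dominated convergence. Once this is settled, the monotonicity computation above is routine, and the only point to watch in the singular case $h(0)=\infty$ is to read the difference of the two right-hand sides on $\{u_1=0\}\cup\{u_2=0\}$ using $\{u_i=0\}\subset\{f=0\}$.
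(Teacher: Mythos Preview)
Your proposal is correct and follows essentially the same route as the paper: first enlarge the test-function class from $C^1_c(\Omega)$ to $W^{1,p}_0(\Omega)\cap L^\infty(\Omega)$ via a two-step approximation (mollification to reach compactly supported Sobolev functions, then $\varphi\wedge\phi_n$ to remove the support constraint, with Fatou supplying the missing global integrability of $h(u_i)f\varphi$), and then test the difference of the two equations with $T_k(u_1-u_2)$ and conclude by monotonicity. The only cosmetic differences are that the paper packages the approximation as a single double-indexed sequence $v_{\eta,n}=\rho_\eta\ast(v\wedge\phi_n)$ and uses Young's inequality where you use H\"older, and that you are slightly more explicit about reading the right-hand side on $\{u_1=0\}\cup\{u_2=0\}$ in the singular case.
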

\begin{proof}
	The first part of the proof consists of an extension of the set of admissible test functions in \eqref{def_distr} from the set of $C^1_c(\Omega)$ to the one of $W^{1,p}_0(\Omega)\cap L^\infty(\Omega)$. Analogously to the proof of Lemma \ref{lemmal1} we consider a nonnegative $v\in W^{1,p}_0(\Omega)\cap L^\infty(\Omega)$ and also a sequence of nonnegative functions $v_{\eta,n}\in C^1_c(\Omega)$ having (we call $v_n$ the almost everywhere limit of $v_{\eta,n}$ as $\eta \to 0$)
	\begin{equation}\label{propapprox}
	\begin{cases}
		v_{\eta,n} \stackrel{\eta  \to 0}{\to} v_{n} \stackrel{n  \to \infty}{\to} v \ \ \ \text{in } W^{1,p}_0(\Omega) \text{ and } \text{$*$-weakly in } L^\infty(\Omega), \\
		\supp v_n\subset \subset \Omega: 0\le v_n\le v \ \ \ \text{for all } n\in \mathbb{N}.
	\end{cases}
	\end{equation}
	Recall that an example of such $v_{\eta,n}$ is given by $\rho_\eta \ast (v \wedge \phi_n)$ ($v \wedge \phi_n:= \inf (v,\phi_n)$) where $\rho_\eta$ is a smooth mollifier while $\phi_n$ is a sequence of nonnegative functions in $C^1_c(\Omega)$ which converges to $v$ in $W^{1,p}_0(\Omega)$.
	\\Hence if one takes $v_{\eta,n}$ as a test function in \eqref{def_distr}
	\begin{equation}\label{uni1}
	\int_{\Omega} |\nabla u_p|^{p-2}\nabla u_p\cdot\nabla 	v_{\eta,n}  + \int_\Omega g(u_p)	v_{\eta,n}= \int_{\Omega}h(u_p)f v_{\eta,n},
	\end{equation} 
	and we want to pass to the limit \eqref{uni1} as $\eta \to 0$. Indeed for the first term and the second term, recalling that $u_p\in W^{1,p}_0(\Omega)$ and $g(u_p)\in L^1(\Omega)$, we can pass to the limit since, by \eqref{propapprox}, $v_{\eta,n}$ converges to $v_n$ in $W^{1,p}_0(\Omega)$ and $*$-weakly in $L^\infty(\Omega)$. Finally for the term on the right hand side we observe that $h(u_p)f \in L^1_{\rm loc}(\Omega)$ and we can also pass here to the limit since $v_{\eta,n}$ converges $*$-weakly in $L^\infty(\Omega)$ to $v_n$ which has compact support in $\Omega$. Hence we deduce
	\begin{equation}\label{uni2}
	\int_{\Omega} |\nabla u_p|^{p-2}\nabla u_p\cdot\nabla v_n  + \int_\Omega g(u_p)v_n= \int_{\Omega}h(u_p)fv_n.
	\end{equation} 	
	Now an application of the Young inequality takes to 
	\begin{equation*}\label{uni3}
	\int_{\Omega}h(u_p)fv_n \le \int_{\Omega} |\nabla u_p|^{p} + \int_{\Omega}|\nabla v_n|^p  + ||g(u_p)||_{L^1(\Omega)}||v||_{L^\infty(\Omega)},	
	\end{equation*}	
	and it follows by \eqref{propapprox} that the right hand side of the previous is bounded with respect to $n$. Hence by the Fatou Lemma with respect to $n$, one gets
	\begin{equation*}\label{uni4}
	\int_{\Omega}h(u_p)fv \le C.	
	\end{equation*}	
	Now  we can easily pass to the limit as $n\to \infty$ in the first two terms of \eqref{uni2} as already done as $\eta \to 0$. For the right hand side of \eqref{uni2} we can apply the Lebesgue Theorem since 
	$$h(u_p)fv_n\le h(u_p)fv\in L^1(\Omega).$$
	Therefore it yields 
	\begin{equation}\label{uni5}
	\int_{\Omega} |\nabla u_p|^{p-2}\nabla u_p\cdot\nabla v  + \int_\Omega g(u_p)v = \int_{\Omega}h(u_p)fv,
	\end{equation} 		 
	for every $v\in W^{1,p}_0(\Omega)\cap L^\infty(\Omega)$. 
	\\Now we suppose the existence of two solutions $\overline{u}_p,\underline{u}_p$ to \eqref{pb1} such that $g(\overline{u}_p), g(\underline{u}_p) \in L^1(\Omega)$. As just proved $\overline{u}_p,\underline{u}_p$ satisfy \eqref{uni5}, which implies the allowance of taking $T_k(\overline{u}_p-\underline{u}_p)$ as a test function in difference between formulation \eqref{uni5} solved by $\overline{u}_p$ and the one solved by $\underline{u}_p$. Then it holds
	\begin{equation*}\label{uni6}
	\int_{\Omega} \left(|\nabla \overline{u}_p|^{p-2}\nabla \overline{u}_p - |\nabla \underline{u}_p|^{p-2}\nabla \underline{u}_p\right)\cdot\nabla T_k(\overline{u}_p-\underline{u}_p)   + \int_\Omega (g(\overline{u}_p)-g(\underline{u}_p))T_k(\overline{u}_p-\underline{u}_p) = \int_{\Omega}(h(\overline{u}_p)-h(\underline{u}_p))fT_k(\overline{u}_p-\underline{u}_p),
	\end{equation*} 	
	and it follows from the assumptions on $g,h$ the second term on the left hand side is nonnegative and the term on the right hand side is non-positive, giving
	\begin{equation*}\label{uni7}
	\int_{\Omega} \left(|\nabla \overline{u}_p|^{p-2}\nabla \overline{u}_p - |\nabla \underline{u}_p|^{p-2}\nabla \underline{u}_p\right)\cdot\nabla (\overline{u}_p-\underline{u}_p) \chi_{\{|\overline{u}_p-\underline{u}_p|\le k\}}  \le 0, \ \ \forall k>0,
	\end{equation*} 	
	that gives $\overline{u}_p=\underline{u}_p$ a.e. in $\Omega$ by a standard monotonicity argument. The proof is done.
\end{proof}
\subsection{The case $p=1$}
\begin{theorem}\label{uni_p=1}
Let $p=1$, let $f>0$ a.e. in $\Omega$, let $h$ be decreasing, and let $g$ be non-decreasing then there is at most one solution $u$ to problem \eqref{pb1} such that $g(u)\in L^1(\Omega)$. 
\end{theorem}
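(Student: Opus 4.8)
The plan is to adapt the monotonicity argument of Theorem~\ref{unip>1} to the $BV$ framework, the extra work being entirely in the pairing calculus. The three ingredients I would combine are: an enlargement of the class of admissible test functions to $BV(\Omega)\cap L^\infty(\Omega)$; the insertion of $T_k(u_1)-T_k(u_2)$ in the difference of the two formulations; and the identities $(z_i,DT_k(u_i))=|DT_k(u_i)|$ together with the monotonicity of $g$ and $h$.

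First I would use that $f>0$ a.e.\ in $\Omega$. By Remark~\ref{rempos} (see \eqref{uposrem}) every solution $u$ is strictly positive a.e., so \eqref{def_distrp=1} reduces to $-\operatorname{div}z+g(u)=h(u)f$ as measures; since $g(u)\in L^1(\Omega)$, Lemma~\ref{lemmal1} (with $\tau=g(u)$, $\sigma=h(u)f$) yields $\operatorname{div}z\in L^1(\Omega)$, hence $z\in\mathcal{DM}^\infty(\Omega)$ and $h(u)f=g(u)-\operatorname{div}z\in L^1(\Omega)$. Because $\operatorname{div}z\in L^1(\Omega)$, any $w\in BV(\Omega)\cap L^\infty(\Omega)$ satisfies $w^*\in L^1(\Omega,\operatorname{div}z)$, and $(z,Dw)$ is a finite Radon measure which is additive in $w$. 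Testing the measure equation against such a $w$, applying the Gauss--Green formula of Lemma~\ref{green} and \eqref{des2}, one gets
\begin{equation*}
\int_\Omega (z,Dw)+\int_\Omega g(u)\,w-\int_{\partial\Omega} w\,[z,\nu]\,d\mathcal H^{N-1}=\int_\Omega h(u)f\,w\qquad\text{for all }w\in BV(\Omega)\cap L^\infty(\Omega),
\end{equation*}
$w$ on $\partial\Omega$ denoting its trace. Moreover, as in Remark~\ref{rempos}, \eqref{def_bordop=1} becomes $u(1+[z,\nu])=0$ on $\partial\Omega$, so $[z,\nu]=-1$ at $\mathcal H^{N-1}$-a.e.\ boundary point where the trace of $u$ is positive, while $\|[z,\nu]\|_{L^\infty(\partial\Omega)}\le1$ everywhere.

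Next, given two solutions $u_1,u_2$ with fields $z_1,z_2$, I would subtract the two formulations tested with $w=T_k(u_1)-T_k(u_2)$, obtaining
\begin{equation*}
\int_\Omega (z_1-z_2,Dw)-\int_{\partial\Omega} w\,([z_1,\nu]-[z_2,\nu])\,d\mathcal H^{N-1}+\int_\Omega (g(u_1)-g(u_2))\,w=\int_\Omega (h(u_1)-h(u_2))f\,w,
\end{equation*}
and I would check that each of the three terms on the left is nonnegative while the one on the right is nonpositive. For the pairing term: arguing as for \eqref{troncata} (Proposition~$4.5$ of \cite{CDC} and $|DT_k(u_i)|\ll|Du_i|$) each $z_i$ satisfies $(z_i,DT_k(u_i))=|DT_k(u_i)|$ as measures, while $(z_i,DT_k(u_j))\le|DT_k(u_j)|$ since $\|z_i\|_{L^\infty(\Omega)^N}\le1$; expanding $(z_1-z_2,Dw)=(z_1-z_2,DT_k(u_1))-(z_1-z_2,DT_k(u_2))$ then gives $(z_1-z_2,Dw)\ge0$. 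For the boundary term: wherever the traces of $T_k(u_1)$ and $T_k(u_2)$ differ, the larger of the two is positive, so the associated normal trace equals $-1$ and is not larger than the other, forcing $w\,([z_1,\nu]-[z_2,\nu])\le0$ there, and the integrand vanishes elsewhere. The $g$-term is nonnegative because $g$ and $T_k$ are non-decreasing, and the right-hand side is nonpositive because $h$ is decreasing and $f>0$. Hence all four terms vanish; in particular $\int_\Omega(h(u_1)-h(u_2))f(T_k(u_1)-T_k(u_2))=0$ with nonpositive integrand, so that at a.e.\ point one has $h(u_1)=h(u_2)$ (whence $u_1=u_2$, $h$ being strictly decreasing) or $T_k(u_1)=T_k(u_2)$ (whence $u_1=u_2$, or else $u_1,u_2\ge k$). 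Letting $k\to\infty$, and noting that $\{u_1\ge k\}\cap\{u_2\ge k\}$ becomes Lebesgue-negligible since $u_1,u_2\in L^1(\Omega)$, I would conclude $u_1=u_2$ a.e.\ in $\Omega$.

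The step I expect to be the main obstacle is making the pairing calculus of the comparison rigorous: checking that $(z_i,D(\cdot))$ is a well-defined finite measure on $BV(\Omega)\cap L^\infty(\Omega)$ that is additive in its argument, that $(z_i,DT_k(u_i))=|DT_k(u_i)|$ holds for every solution and not merely for the one built by approximation, and that the weak normal traces $[z_i,\nu]$ interact with the boundary condition as claimed. Once these facts are secured the monotonicity argument is routine, exactly as in the proof of Theorem~\ref{unip>1}.
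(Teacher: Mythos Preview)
Your proposal is correct and follows essentially the same route as the paper: reduce \eqref{def_distrp=1} to $-\operatorname{div}z+g(u)=h(u)f$ via $f>0$, upgrade $z$ to $\mathcal{DM}^\infty(\Omega)$ through Lemma~\ref{lemmal1}, extend the test class to $BV(\Omega)\cap L^\infty(\Omega)$, insert $T_k(u_1)-T_k(u_2)$ into the difference of the two formulations, and conclude by the identities $(z_i,DT_k(u_i))=|DT_k(u_i)|$ together with the monotonicity of $g,h$. The only cosmetic differences are in the bookkeeping of the boundary term (the paper rewrites it as $\int_{\partial\Omega}T_k(u_1)(1+[z_2,\nu])+\int_{\partial\Omega}T_k(u_2)(1+[z_1,\nu])\ge0$ rather than your sign analysis on $w$) and in the final step (the paper observes directly that the vanishing of the integrand forces $T_k(u_1)=T_k(u_2)$ for every $k$, which is equivalent to your argument).
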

\begin{proof}

Let $u$ be a solution to \eqref{pb1}. If $h(0)=\infty$ then from $h(u)f\in L^1_{\rm loc}(\Omega)$ we deduce that up to a set of zero Lebesgue measure
$$\{u=0\}\subset \{f=0\},$$
which implies that $u>0$ a.e. in $\Omega$ since $f>0$ a.e. in $\Omega$. This means that it holds 
\begin{equation*}
-\operatorname{div}z + g(u) = h(u)f,
\end{equation*}
as measures in $\Omega$. Let us observe that, since $g(u)\in L^1(\Omega)$, one can apply Lemma \ref{lemmal1} from which one has that $\operatorname{div}z \in L^1(\Omega)$, namely $z\in \DM(\Omega)$. Hence a standard density argument implies that 
\begin{equation}\label{testestese}
-\int_{\Omega}v \operatorname{div}z + \int_{\Omega} g(u)v = \int_{\Omega} h(u)fv,
\end{equation}	
for all $v\in BV(\Omega)\cap L^\infty(\Omega)$.
Now suppose that $u_1$ and $u_2$ are solutions to \eqref{pb1} with fields, respectively, $z_1$ and $z_2$ and test with $T_k(u_1)- T_k(u_2)$ ($k>0$) the difference of weak formulations \eqref{testestese} solved by $u_1,u_2$,  and by also using Lemma \ref{green} one gets
\begin{equation*}
\begin{aligned}
&\int_\Omega (z_1, DT_k(u_1)) - \int_\Omega(z_1, DT_k(u_2)) -\int_\Omega(z_2, DT_k(u_1)) + \int_\Omega(z_2, DT_k(u_2))\\
&- \int_{\partial\Omega}(T_k(u_1)-T_k(u_2))[z_1,\nu])\, d\mathcal H^{N-1} +\int_{\partial\Omega}(T_k(u_1)-T_k(u_2))[z_2,\nu])\, d\mathcal H^{N-1} + \int_{\Omega} \left(g(u_1)-g(u_2)\right)(T_k(u_1)-T_k(u_2))						    	
\\
&= \int_\Omega (h(u_1)- h(u_2))f(T_k(u_1)-T_k(u_2)).
\end{aligned}
\end{equation*} 
Now reasoning as to deduce \eqref{troncata} one gets that $(z, DT_k(u_1))=|DT_k(u_1)|$ and that $(z, DT_k(u_2))=|DT_k(u_2)|$ as measures in $\Omega$. Furthermore we have (recalling also \eqref{def_bordop=1})
\begin{equation*}\label{unip=1_1}
\begin{aligned}
&\int_\Omega |DT_k(u_1)| - \int_\Omega(z_1, DT_k(u_2)) -\int_\Omega(z_2, DT_k(u_1)) + \int_\Omega |DT_k(u_2)|\\
&+  \int_{\partial\Omega}(T_k(u_1) +T_k(u_1) [z_2,\nu]  )\, d\mathcal H^{N-1} +\int_{\partial\Omega}(T_k(u_2) + T_k(u_2)[z_1,\nu] )\, d\mathcal H^{N-1}	
\\
&\le \int_\Omega (h(u_1)- h(u_2))f(T_k(u_1)-T_k(u_2))\le 0,
\end{aligned}
\end{equation*} 
and, since $||z_i||_{L^\infty(\Omega)^N} \le 1$ and $[z_i,\nu] \in [-1,1]$ for $i=1,2$, one gets 				    	
$$\displaystyle \int_\Omega (h(u_1)- h(u_2))f(T_k(u_1)-T_k(u_2))= 0,$$
which gives $T_k(u_1)=T_k(u_2)$ a.e. in $\Omega$ for every $k>0$. 
\end{proof}
\section{Some examples and generalizations}
\label{sec:example}
This section is devoted to examples and generalizations of problem \eqref{pb1}. At first we give two examples which, in some sense, suggest that Theorem \ref{teo_p>1} is optimal. Namely we present an example ($p=2$ and $h(s)=s^{-\theta}, (0<\theta<1)$) in which we find an infinite energy solution to \eqref{pb1} if $q<\frac{1- m\theta}{m-1}$. The second example shows that there exist unbounded solutions to \eqref{pb1} when $p=1$ regardless of the choice of $q$. Concerning the generalizations we provide a particular case where the absorption term $g$ gives always rise to the existence of bounded solutions to \eqref{pb1}. Moreover we also deal with more general operators as well as infinite energy solutions when $p>1$. 

\subsection{Examples}

We start with an example which shows that in case $p=2$ and $q<\frac{1- m\theta}{m-1}$ there exists a solution to \eqref{pb1} $u\not\in H^1_0(\Omega)$.  
\begin{example}\label{examplesharp}
	We denote by $r^*=\frac{rN}{N-r}$, and by $r^{**}=(r^*)^*$.
	It is well known that for any $0<\theta<1$ and $\frac{N(\theta+1)}{N+2\theta}<r<\frac{2N}{N+2}$ there exists a nonnegative $f\in L^r(\Omega)$ such that
	\begin{equation*}
	\begin{cases}
	\dis -\Delta u = f & \text{in}\;\Omega,\\
	u=0 & \text{on}\;\partial \Omega,
	\end{cases}
	\end{equation*}
    admits a nonnegative solution $u\not\in H^1_0(\Omega)$ which only belongs to $W_0^{1,r}(\Omega)\cap L^{r^{**}}(\Omega)$. Hence $u$ also solves
	\begin{equation*}
	\begin{cases}
	\dis -\Delta u + u^q = \frac{(f + u^q)u^\theta}{u^\theta} & \text{in}\;\Omega,\\
	u=0 & \text{on}\;\partial \Omega,
	\end{cases}
	\end{equation*}	
	where $q\ge 0$. We note that if $q\le \frac{r^{**}}{r}$ then $(f + u^q)u^\theta\in L^m(\Omega)$ with
	$$m=\frac{r^{**}r}{r\theta + r^{**}},$$
	which, under the assumption on $r$, gives $m>1$. Hence, fixing $q= \frac{r^{**}}{r}$, one gets 
	$$q= \frac{r^{**}}{r} < \frac{1-m\theta}{m-1}= \frac{r^{**}(1-r\theta) +r\theta}{r^{**}(r-1) - r\theta}.$$
	We also stress that $q$ and $\frac{1-m\theta}{m-1} \to 2^*-1$ as $r\to \frac{2N}{N+2}$ and $u$, in this case, belongs to $H^1_0(\Omega)$. Hence, we have shown that there always exist data $f\in L^m(\Omega)$ such that one can find a solution $u$ with infinite energy for any $q<\frac{1-m\theta}{m-1}$.
\end{example}
Now we give an example in the case of the $1$-Laplace operator which shows that the presence of the absorption term may not, in general, guarantee the boundedness of solutions to \eqref{pb1}. We recall that for a nonnegative $f\in L^{N,\infty}(\Omega)$ the problem 
\begin{equation*}
\begin{cases}
\dis -\Delta_1 u = \frac{f}{u^\theta} & \text{in}\;\Omega,\\
u=0 & \text{on}\;\partial \Omega,
\end{cases}
\end{equation*}	
has a bounded solution when $\theta\le 1$ as proved in \cite{dgop} and \cite{dgs}. 
Let us consider 
\begin{equation}
\label{example}
\begin{cases}
\dis -\Delta_1 u + u^q = \frac{f}{u^\theta} & \text{in}\;B_R(0),\\
u=0 & \text{on}\;\partial B_R(0),
\end{cases}
\end{equation}		
where $q,\theta \ge 0$, $R>0$ and we set $r=|x|$.\\
We show that the regularizing effect given by $q$ is not sufficient to provide bounded solutions to the problem when the datum does not belong to $L^{N,\infty}(\Omega)$. Indeed, even if $q$ is large one can always find a datum $f$ which is \textit{almost} in $L^{N,\infty}(\Omega)$ and for which the problem admits an unbounded solution.
\begin{example}\label{exampleunbounded}
Let us consider problem \eqref{example} where $\displaystyle f= Nr^{-1-\theta\alpha}$ ($\alpha>0$). We look for an unbounded radial solution $u(r)$ such that $u'(r)<0$. In this case the vector field $z$ is given by $z(x)=-\frac{x}{r}$ and $-\operatorname{div}z= \frac{N-1}{r}$. This means that the boundary condition is satisfied as $[z,\nu]=-1$  and then a solution of the form $u(r)= r^{-\alpha}$ needs to satisfy
$$\frac{N-1}{r} + \frac{1}{r^{\alpha q}} = \frac{N}{r}.$$
Hence $u(r)= r^{-\frac{1}{q}}$ solves \eqref{example}. We explicitly observe that, formally, if one takes $q\to\infty$ then $f$ turns out to be in $L^{N,\infty}(\Omega)$ and $u$, as expected, is bounded.
\end{example}

\subsection{Infinite energy solutions}
The aim of this section is twofold: first of all we are interested in considering problems with more general operators and absorption terms. Secondly, we deal with data which take us out of the finite energy setting but, as we will see below, the absorption term will provide a regularizing effect even in this case. Let $1<p<N$ and consider the following problem
\begin{equation}
	\label{pbgen}
	\begin{cases}
		\dis -\operatorname{div}(a(x,\nabla u)) + g(x,u) = h(u)f & \text{in}\;\Omega,\\
		u=0 & \text{on}\;\partial\Omega.
	\end{cases}
\end{equation}
where $\displaystyle{a(x,\xi):\Omega\times\mathbb{R}^{N} \to \mathbb{R}^{N}}$ is a Carath\'eodory function satisfying the classical Leray-Lions structure conditions, namely
\begin{align}
&a(x,\xi)\cdot\xi\ge \alpha|\xi|^{p}, \ \ \ \alpha>0,
\label{cara1}\\
&|a(x,\xi)|\le \beta|\xi|^{p-1}, \ \ \ \beta>0,
\label{cara2}\\
&(a(x,\xi) - a(x,\xi^{'} )) \cdot (\xi -\xi^{'}) > 0,
\label{cara3}	
\end{align}
for every $\xi\neq\xi^{'}$ in $\mathbb{R}^N$ and for almost every $x$ in $\Omega$.
\\Once again $f\in L^m(\Omega)$ with $m>1$ and $h:[0,\infty)\to [0,\infty]$ is a continuous function, possibly singular with $h(0)\not=0$, and finite outside the origin satisfying \eqref{h1} with $\gamma<1$ and \eqref{h2} with $\theta<1$.
The absorption term $g:\Omega \times [0,\infty)\to [0,\infty)$ is continuous and $g(x,0)=0$ for almost every $x \in \Omega$. Moreover we also require that
\begin{equation}\label{gloc}
	\sup_{s\in[0,t]} g(x,s) \in L^1_{\rm loc}(\Omega), \ \ \forall t\ge 0,
\end{equation}
and the following growth condition at infinity
\begin{equation}\label{g2}\tag{g2}
	\exists q: \ \frac{p-1-pm\theta}{pm-p+1} < q < \frac{1-m\theta}{m-1}, \  \exists\nu, s_1>0 : g(x,s)\geq \nu s^{q} \ \text{for all}\;s\geq s_1.
\end{equation}
First of all we remark that, under the assumptions listed above, Theorem \ref{teo_p>1} still holds when $g$ satisfies \eqref{g1} in place of \eqref{g2} with minor modifications in the proof. In this section, as already remarked, we are interested in extending the above cited theorem when the regularizing effect given by $g$ is not sufficient in order to expect $W^{1,p}_0$-solutions. Namely it holds the following result of which we only give the idea of the proof.
\begin{theorem}\label{teo_p>1gen}
	Let $1<p<N$, $0\le f\in L^{m}(\Omega)$ with $m>1$, let $h$ satisfy \eqref{h1} with $\gamma< 1$ and \eqref{h2} with $\theta<1$, and let $g$ satisfy \eqref{gloc} and \eqref{g2}.
	Then there exists a solution to problem \eqref{pb1} which belongs to $W^{1,r}_0(\Omega)$ with $r=\frac{p(q+\theta)m}{q+1}$.
\end{theorem}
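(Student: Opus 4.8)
The plan is to mimic the approximation scheme used for Theorem \ref{teo_p>1}, but now track a weaker a priori estimate since the regularizing effect of $g$ only produces $W^{1,r}_0$-regularity with $r<p$. First I would consider the approximate problems \eqref{pbpn2} (with $g$ replaced by a truncation $g_k(x,\cdot)$ and with the Leray--Lions operator $a(x,\nabla u_{n})$ in place of $-\Delta_p u_n$), whose solvability and nonnegativity follow as in Section \ref{sec:apriori}, using \eqref{gloc} to make sense of the lower-order term locally. The core step is the a priori estimate: testing with $u_n$ and using \eqref{cara1}, \eqref{h1}, \eqref{h2} one controls $\int_\Omega |\nabla u_n|^p \chi_{\{u_n\le\overline s\}}$ and, via \eqref{g2} and Young's inequality (exactly as in Lemma \ref{lemmapriori0}, case ii)), one gets a bound on $\int_{\{u_n>s_1\}} u_n^{q+1}$. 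Since $q<\frac{1-m\theta}{m-1}$ we do \emph{not} get a bound on the full gradient, so instead I would test with $u_n^{1+\lambda}$ (or $(u_n+1)^{1+\lambda}-1$) for a suitable $\lambda>0$ — this is the standard device for proving $W^{1,r}$-regularity of infinite-energy solutions. This yields $\int_\Omega |\nabla u_n|^p u_n^{\lambda} \le C\big(1+\int_\Omega h_n(u_n)f_n u_n^{1+\lambda}\big)$, and the right-hand side is controlled in terms of $\int_{\{u_n>\overline s\}} u_n^{1+\lambda-\theta} f_n$ plus lower-order contributions; one then splits $|\nabla u_n|^r = (|\nabla u_n|^p u_n^\lambda)^{r/p}\, u_n^{-\lambda r/p}$ and applies Hölder with exponent $p/r$, so that $\int_\Omega |\nabla u_n|^r \le C + C\big(\int_\Omega u_n^{-\lambda r/(p-r)}\big)^{1-r/p}$. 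The exponents $\lambda$ and $r$ must be chosen so that $u_n^{-\lambda r/(p-r)}$ is integrable thanks to the Sobolev embedding $W^{1,r}_0\hookrightarrow L^{r^*}$, i.e. one solves the bootstrap $\lambda r/(p-r) \le r^*$ together with the requirement that $1+\lambda-\theta$ match the Lebesgue bound coming from \eqref{g2}; arithmetic gives exactly $r=\frac{p(q+\theta)m}{q+1}$, and one checks that the lower bound $q>\frac{p-1-pm\theta}{pm-p+1}$ is precisely what forces $1<r<p$ and keeps all the exponents in the admissible range.

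Once the bound $\|u_n\|_{W^{1,r}_0(\Omega)}\le C$ is established, I would extract a subsequence with $u_n\rightharpoonup u$ in $W^{1,r}_0(\Omega)$, $u_n\to u$ a.e. and in $L^s(\Omega)$ for $s<r^*$, together with equi-integrability of $g(x,u_n)$ (the estimate \eqref{equi} goes through verbatim since it only used the absorption sign and the summability of $f_n$) so that $g(x,u_n)\to g(x,u)$ in $L^1_{\rm loc}(\Omega)$. Almost everywhere convergence of the gradients is obtained via the Leray--Lions almost-everywhere-convergence lemma (Theorem~2.1 of \cite{bm}, or the Boccardo--Murat argument adapted to the $W^{1,r}$ setting), using \eqref{cara1}--\eqref{cara3}; this lets us pass to the limit in $a(x,\nabla u_n)\cdot\nabla\varphi$. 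The treatment of the singular right-hand side — splitting $\int h_n(u_n)f_n\varphi$ over $\{u_n\le\delta\}$ and $\{u_n>\delta\}$, using $V_{\delta,\delta}(u_n)\varphi$ as a test function to kill the first piece in the limit, and invoking $\{u=0\}\subset\{f=0\}$ — is copied almost word for word from the proof of Theorem \ref{teo_p>1} in the case $p>1$, so I would only indicate the modifications.

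The main obstacle is the bootstrap bookkeeping in the a priori estimate: one must choose the test-function exponent $\lambda$ so that the gradient estimate, the Lebesgue estimate forced by \eqref{g2}, and the Sobolev embedding close up \emph{consistently}, and verify that the resulting $r$ is genuinely in $(1,p)$ exactly under the double inequality for $q$ in \eqref{g2}; the lower bound $q>\frac{p-1-pm\theta}{pm-p+1}$ is what guarantees $r>1$ (so that $W^{1,r}_0$ is a meaningful space and the vector field $a(x,\nabla u)$ is at least in $L^1$), while $q<\frac{1-m\theta}{m-1}$ is what keeps $r<p$ (otherwise we would be back in the finite-energy regime of Theorem \ref{teo_p>1}). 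A secondary technical point is that, with only $W^{1,r}$ regularity and no global control on $g(x,u)u$, one must be slightly careful about which test functions are admissible and about the local nature of some terms; since the statement only claims existence of \emph{a} solution in the sense of Definition \ref{weakdefp>1}, checking $|\nabla u|^{p-1}\in L^1(\Omega)$ (which follows from $r>p-1$, itself a consequence of $q>\frac{p-1-pm\theta}{pm-p+1}$) and $g(x,u),h(u)f\in L^1_{\rm loc}(\Omega)$ suffices, and this is why the authors only sketch the argument.
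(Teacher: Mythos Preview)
Your passage-to-the-limit outline is fine and matches the paper, but the core a priori estimate has a genuine sign error. Under \eqref{g2} one has $q<\frac{1-m\theta}{m-1}$, and this is \emph{precisely} the regime where the argument of Lemma \ref{lemmapriori0} ii) fails: testing \eqref{pbngen} with $u_n$ and applying Young's inequality produces $\varepsilon\int u_n^{(1-\theta)m/(m-1)}$ on the right, and since $(1-\theta)m/(m-1)>q+1$ this term \emph{cannot} be absorbed into $\nu\int_{\{u_n>s_1\}}u_n^{q+1}$. The same obstruction persists for the test function $u_n^{1+\lambda}$ with $\lambda>0$: absorption into $\nu\int u_n^{q+1+\lambda}$ requires $q+1+\lambda\ge (1+\lambda-\theta)m/(m-1)$, which simplifies to $\lambda\le q(m-1)-(1-\theta m)<0$. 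Raising the exponent therefore makes the mismatch worse, not better, and your subsequent H\"older split then involves $\int u_n^{-\lambda r/(p-r)}$, a \emph{negative} power of $u_n$ that no Sobolev embedding can control.

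The paper's remedy is exactly to take the test-function exponent \emph{below} $1$: one tests with $(u_n+\varepsilon)^{\eta+\gamma}-\varepsilon^{\eta+\gamma}$, where $\eta+\gamma=(q+\theta)m-q\in(0,1)$ under \eqref{g2}. This yields simultaneously the weighted gradient estimate $\int|\nabla u_n|^p(u_n+\varepsilon)^{-(1-\eta-\gamma)}\le C$ and, by absorption, the Lebesgue bound $u_n\in L^{(q+\theta)m}(\Omega)$ --- no Sobolev bootstrap is needed. One then writes
\[
|\nabla u_n|^r=\Bigl(\frac{|\nabla u_n|^p}{(u_n+\varepsilon)^{1-\eta-\gamma}}\Bigr)^{r/p}(u_n+\varepsilon)^{(1-\eta-\gamma)r/p}
\]
and applies Young's inequality; the second factor now carries a \emph{positive} power, and the choice $r=\frac{p(q+\theta)m}{q+1}$ is exactly what makes $\frac{(1-\eta-\gamma)r}{p-r}=(q+\theta)m$, so the Lebesgue bound just obtained closes the estimate directly.
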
  
\begin{proof}
	We only sketch the proof which relies (also in this case) on an approximation argument. We consider a nonnegative $u_n\in W^{1,p}_0(\Omega)\cap L^\infty(\Omega)$ solution to
\begin{equation}\label{pbngen}
	\begin{cases}
		\dis -\operatorname{div} (a(x,\nabla u_{n})) + g(x,u_{n})= h_n(u_{n})f_n & \text{in}\;\Omega,\\
		u_{n}=0 & \text{on}\;\partial\Omega.
	\end{cases}
\end{equation}
whose existence can be proved as for the solutions to \eqref{pbpn2}.\\
Let us denote by $\eta=(q+\theta)m -q -\gamma$ and observe that assumptions on $q$ imply $0<\eta+\gamma<1$. Hence we test the weak formulation of \eqref{pbngen} with $(u_n + \varepsilon)^{\eta+\gamma} - \varepsilon^{\eta+\gamma}$ and we have 
\begin{equation}\label{gen0}
	(\eta+\gamma)\int_\Omega \frac{|\nabla u_n|^p}{(u_n+\varepsilon)^{1-\eta-\gamma}} +\int_{\Omega} g(x,u_n) ((u_n + \varepsilon)^{\eta+\gamma} - \varepsilon^{\eta+\gamma})\le \int_{\Omega} h_n(u_{n})f_n (u_n + \varepsilon)^{\eta+\gamma} , 
\end{equation}
which, getting rid of the first term, taking $\epsilon$ to zero, and using \eqref{g2}, takes to (suppose $s_1\le\overline{s}$)
\begin{equation}\label{gen1}
\nu\int_{\{u_n> s_1\}} u_n^{q+\eta+\gamma} \le \underline{s}^{\eta}\int_{\{u_n< \underline{s}\}} f  +\max_{s\in [\underline{s}, \overline{s}]} [h(s)s^{\eta+\gamma}] \int_{\{\underline{s}\le u_n\le \overline{s}\}} f + C_{\delta}\int_{\Omega} f^m + \delta\int_{\{u_n> s_1\}} u_n^{\frac{(\eta+\gamma-\theta)m}{m-1}}. 
\end{equation}
Our choice of $\eta$ implies $\frac{(\eta+\gamma-\theta)m}{m-1}=q+\eta+\gamma$ and then \eqref{gen1} gives that $u_n$ is bounded in $L^{(q+\theta)m}(\Omega)$ with respect to $n$. Getting rid of the second term in \eqref{gen0} and using that $u_n$ is bounded in $L^{(q+\theta)m}(\Omega)$ allows also to deduce that
\begin{equation*}
	\int_\Omega \frac{|\nabla u_n|^p}{(u_n+\varepsilon)^{1-\eta-\gamma}} \le C, 
\end{equation*}
for a constant $C$ independent from $n$. 
\\Now from the Young inequality one has that
\begin{equation*}
\begin{aligned}
\int_\Omega |\nabla u_n|^r 
&= \int_\Omega \frac{|\nabla u_n|^r}{(u_n+\varepsilon)^{\frac{(1-\eta-\gamma)r}{p}}}(u_n+\varepsilon)^\frac{(1-\eta-\gamma)r}{p} \le \int_\Omega \frac{|\nabla u_n|^p}{(u_n+\varepsilon)^{1-\eta-\gamma}} + \int_\Omega (u_n+\varepsilon)^\frac{(1-\eta-\gamma)r}{p-r},\\
&\le C + \int_\Omega (u_n+\varepsilon)^\frac{(1-\eta-\gamma)r}{p-r} = C + \int_\Omega (u_n+\varepsilon)^{(q+\theta)m}\le C,
\end{aligned}
\end{equation*}
where the last equality follows from the choice of $r$. Once that the previous estimate holds then the existence of a solution with arguments similar to the ones of the proof of Theorem \ref{teo_p>1}. 
\end{proof}

\subsection{A particular case of bounded solutions}

In the spirit of \cite{arbo,arbo2} we consider a particular case of \eqref{pbgen}, namely 

\begin{equation}
	\label{pbgen2}
	\begin{cases}
		\dis -\operatorname{div}(a(x,\nabla u)) + V u = h(u)f & \text{in}\;\Omega,\\
		u=0 & \text{on}\;\partial\Omega.
	\end{cases}
\end{equation}
where $\displaystyle{a(x,\xi):\Omega\times\mathbb{R}^{N} \to \mathbb{R}^{N}}$ once again satisfies \eqref{cara1}, \eqref{cara2} and \eqref{cara3}.
Here $V,f$ are nonnegative functions in $L^1(\Omega)$ such that 
\begin{equation}\label{V}
	f(x)\le V(x) \ \ \text{for almost every }x\in \Omega.
\end{equation}
As before $h:[0,\infty)\to [0,\infty]$ is continuous and possibly singular in zero (with $h(0)\not=0$) which satisfies \eqref{h1} and which it is bounded at infinity, namely it satisfies \eqref{h2} with $\theta=0$.
\\Under the above set of hypotheses we prove that the regularizing effect given by $V$ implies the existence of a bounded (and with finite energy) solution to problem \eqref{pb1}; we remark that this has been already proven in \cite{arbo} in case $p>1$ and $h(s)=1$. 
\begin{theorem}\label{teo_p>1gen2}
	Let $1\le p<N$, let $0\le f\in L^{1}(\Omega)$ satisfy \eqref{V}, let $h$ satisfy \eqref{h1} with $\gamma\le1$ and \eqref{h2} with $\theta=0$.
	Then there exists a bounded solution $u$ to problem \eqref{pb1}. Moreover if $1<p<N$ then $u$ belongs to $W^{1,p}_0(\Omega)$.
\end{theorem}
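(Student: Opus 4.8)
The plan is to argue by approximation, following line by line the scheme of Sections~\ref{sec:apriori}--\ref{sec:exi}. For $1<p<N$ I would consider the approximating problems
\begin{equation*}
\begin{cases}
\dis -\operatorname{div}(a(x,\nabla u_{n})) + V u_{n}= h_n(u_{n})f_n & \text{in}\;\Omega,\\
u_{n}=0 & \text{on}\;\partial\Omega,
\end{cases}
\end{equation*}
where $h_n=T_n\circ h$ and $f_n=T_n(f)$, whose nonnegative solutions $u_n\in W^{1,p}_0(\Omega)\cap L^\infty(\Omega)$ exist by \cite{ll} (for $p=1$ one first treats the genuine $p$-Laplacian with $1<p<N$ and then lets $p\to 1$, as in Section~\ref{sec:p1}). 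The only new ingredient with respect to Theorem~\ref{teo_p>1} is that the coupling condition \eqref{V}, together with $\theta=0$ (so that $h$ is bounded at infinity), produces a bound $||u_n||_{L^\infty(\Omega)}\le L$ with $L$ independent of $n$ and $p$; once this is available, the energy estimates and the passage to the limit are exactly those of Sections~\ref{sec:apriori}--\ref{sec:exi}.

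The first and crucial step is thus the $L^\infty$ estimate. I would set $L:=\max\{\overline{s},\overline{C}\}$ with $\overline{s},\overline{C}$ as in \eqref{h2} (recall that, since $\theta=0$, $h(s)\le\overline{C}$ for $s\ge\overline{s}$) and test the approximating equation with $G_L(u_n)=(u_n-L)^+\in W^{1,p}_0(\Omega)$. Using the coercivity \eqref{cara1} of the principal part one gets
\begin{equation*}
\alpha\int_\Omega|\nabla (u_n-L)^+|^p + \int_{\{u_n>L\}} V u_n (u_n-L)^+ \le \int_{\{u_n>L\}} h_n(u_n) f_n (u_n-L)^+ .
\end{equation*}
On $\{u_n>L\}$ one has $u_n>L\ge\overline{s}$, hence $h_n(u_n)\le h(u_n)\le\overline{C}\le L$, and $f_n\le f\le V$ by \eqref{V}; therefore the right-hand side is at most $L\int_{\{u_n>L\}} V(u_n-L)^+$, while $u_n>L$ on the same set gives $\int_{\{u_n>L\}} V u_n (u_n-L)^+\ge L\int_{\{u_n>L\}} V(u_n-L)^+$. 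The two integrals against $V$ cancel, and one is left with $\alpha\int_\Omega|\nabla (u_n-L)^+|^p\le 0$; since $(u_n-L)^+$ has zero trace this forces $(u_n-L)^+\equiv 0$, i.e. $0\le u_n\le L$, with $L$ depending only on the data. I emphasize that this closes the estimate also on the set $\{V=0\}$ — where the absorption term carries no information and $f$ vanishes — precisely because the bound is extracted from the gradient term and not from $\int_\Omega V u_n(u_n-L)^+$ alone; this is really the only genuine obstacle of the proof.

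Granted the uniform $L^\infty$ bound, I would recover the energy estimates by testing with $u_n$: since $\gamma\le 1$ and $u_n$ is bounded one has $h_n(u_n)u_n\le C$, so $\int_\Omega h_n(u_n)f_nu_n\le C||f||_{L^1(\Omega)}$ and thus $||u_n||^p_{W^{1,p}_0(\Omega)}+\int_\Omega Vu_n^2\le C$ uniformly in $n$ and $p$; as in \eqref{bv} this also gives a uniform $BV$ bound when $p\to 1$. From here the existence proof transcribes the one of Theorem~\ref{teo_p>1}: for $1<p<N$ one passes to the limit in $n$ using the almost everywhere convergence of the gradients (Theorem $2.1$ of \cite{bm}, which applies since $a$ is of Leray--Lions type), the $L^1$-convergence of the absorption term (here simply the domination $Vu_n\le LV\in L^1(\Omega)$), and the test functions $V_{\delta,\delta}(u_n)\varphi$ to handle the possibly singular right-hand side near $\{u_p=0\}$, the presence of $a(x,\nabla\cdot)$ in place of the $p$-Laplacian requiring only cosmetic changes. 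For $p=1$ one then lets $p\to 1$ and repeats the four steps of Section~\ref{sec:p1} — existence of a field $z$ with $||z||_{L^\infty(\Omega)^N}\le 1$, the distributional identity \eqref{def_distrp=1}, the identification \eqref{def_zp=1}, and the boundary condition \eqref{def_bordop=1} — where now the boundedness of $u$ together with $Vu\in L^1(\Omega)$ makes Lemmas~\ref{lempairing} and~\ref{lemmal1} directly applicable.
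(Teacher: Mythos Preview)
Your proposal is correct and follows essentially the same route as the paper. The key $L^\infty$ estimate is obtained in both cases by testing with $G_k(u_n)$ and using \eqref{V} to absorb the right-hand side into the zero-order term; you make the threshold explicit as $L=\max\{\overline{s},\overline{C}\}$, whereas the paper writes the condition as $k\ge\sup_{s\in[k,\infty)}h(s)$ and says one can fix $k=\overline{k}$ large enough, but these are the same observation.
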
  
\begin{proof}
	As for Theorem \ref{teo_p>1gen} we just sketch the proof of the main estimates. Let us consider a nonnegative $u_n\in W^{1,p}_0(\Omega)\cap L^\infty(\Omega)$, which is a solution to
	\begin{equation}\label{pbngen2}
		\begin{cases}
			\dis -\operatorname{div} (a(x,\nabla u_{n})) + V u_n = h_n(u_{n})f_n & \text{in}\;\Omega,\\
			u_{n}=0 & \text{on}\;\partial\Omega.
		\end{cases}
	\end{equation}
	whose existence, once again, follows as for the solutions to \eqref{pbpn2}.\\
	Let us take $G_{k}(u_n)$ ($k>0$) as a test function in the weak formulation of \eqref{pbngen2} then
	\begin{equation*}
		\alpha\int_\Omega |\nabla G_{k}(u_n)|^p + \int_\Omega Vu_nG_{k}(u_n) \le \int_\Omega h(u_n)fG_{k}(u_n) \le \sup_{s\in[k,\infty)}[h(s)]\int_\Omega V G_{k}(u_n),
	\end{equation*}
	which gives
	\begin{equation*}
	\alpha\int_\Omega |\nabla G_{k}(u_n)|^p + \int_\Omega V[u_n-\sup_{s\in[k,\infty)}[h(s)]]G_{k}(u_n) \le 0.
	\end{equation*}	
	Hence one can always fix $k=\overline{k}$ (for some $\overline{k}>0$ independent of $n$ and $p$) sufficiently large such that the second term on the left hand side of the previous is nonnegative. This implies that $||u_n||_{L^\infty(\Omega)}\le \overline{k}$. Now if one takes $u_n$ itself as a test function in the weak formulation of \eqref{pbngen2} then it follows that $||u_n||_{W_0^{1,p}(\Omega)}\le C$ with $C$ independent of $n$ and $p$. The above estimates allows to reason as in the proof of Theorem \ref{teo_p>1} in order to conclude. 
\end{proof}
\section{Locally finite energy solutions in presence of strong singularities}
\label{sec:strong}

Up to now we have focused on various features of problem \eqref{pb1} when $h$ satisfies \eqref{h1} with $\gamma\le1$. The aim of this section is tackling the case of a function $h$ blowing up faster at the origin. Hence here we refer to \eqref{pb1} with $h$ as a continuous function (possibly unbounded at the origin) satisfying \eqref{h1} with $\gamma>1$ and \eqref{h2}. Moreover the function $g$ is continuous, $g(0)=0$ and, analogously to \eqref{g1}, if $\theta<1$ we require the following growth condition at infinity  
\begin{equation}\label{g3}\tag{g3}
\exists q: \ q \ge \frac{\gamma-m\theta}{m-1}, \  \exists\nu, s_1>0 : g(s)\geq \nu s^{q} \ \text{for all}\;s\geq s_1.
\end{equation}

In case $\gamma>1$ problem \eqref{pb1} is quite different; for instance let us think to the case $p>1$, $g\equiv 0$ and $f$ belonging to a suitable Lebesgue space, then we only have global estimates on some power of the solution $u$ in $W^{1,p}_0(\Omega)$ which can be formally deduced by taking $u^\gamma$ as test function in the weak formulation of \eqref{pb1}. This (and other estimates) allows us to deduce local estimates on the solution itself. Otherwise if $f$ is not sufficiently regular we are just able to prove that, in general, every truncation of the solution has locally finite energy. For this kind of effects and even more we refer to \cite{bo,do,op,op2}. When $p=1$ the same effect arises: for instance in \cite{dgop} it is proved the existence of a locally $BV$-solution if $f$ is in $L^N(\Omega)$. Here we prove that if $f\in L^m(\Omega)$ with $m\ge 1$ ($m>1$ if $\theta<1$) then $q$ can be chosen sufficiently large such that there exists a solution to \eqref{pb1} with local finite energy. 
 The discussion above takes naturally to a suitable {\it localization} of the notions of solution given by Definitions \ref{weakdefp>1} and \ref{weakdefpositive}. 
\begin{defin}\label{distributional}
    Let $1<p<N$ then $u\in W^{1,1}_{\rm loc}(\Omega)$ such that $|\nabla u|^{p-1} \in L^1_{\rm loc}(\Omega)$ is a solution to problem \eqref{pb1} if $g(u), h(u)f \in L^1_{\rm loc}(\Omega)$, it holds
	\begin{equation}
	G_{k}(u)\in W^{1,p}_{0}(\Omega) \ \ \ \text{for all}\ k>0\,,\label{pbordodef}
	\end{equation}
	and
	\begin{equation}\int_{\Omega}|\nabla u|^{p-2} \nabla u\cdot \nabla \varphi + \int_{\Omega} g(u)\varphi = \int_{\Omega}h(u)f\varphi, \ \ \ \forall \varphi\in C^1_c(\Omega). \label{pweakdef}
	\end{equation}
\end{defin}
\begin{remark}
	We remark that condition \eqref{pbordodef} is the way the boundary datum is achieved. When $g\equiv 0$ this kind of request is already present in \cite{cst,dgop} and, in particular, in \cite{cst}, the authors prove uniqueness of solutions in $W^{1,p}_{\rm loc}(\Omega)$ when $h(s)=s^{-\gamma}$ for suitable data and a regular domain. A different request for the boundary condition, in case $\gamma>1$, is that 
	\begin{equation*}
	u^{\frac{\gamma-1+p}{p}}\in W^{1,p}_{0}(\Omega),
	\end{equation*}
	as one can find in \cite{bo,do,op,op2}.
\end{remark}
Then we give the one for $p=1$.
\begin{defin}
	\label{weakdefpositivestrong}
	Let $p=1$ then a nonnegative $u\in BV_{\rm loc}(\Omega)$ such that $\chi_{\{u>0\}} \in BV_{\rm loc}(\Omega)$ and $u^\gamma \in BV(\Omega)$ is a solution to problem \eqref{pb1} if $g(u), h(u)f \in L^1_{\rm loc}(\Omega)$ and if there exists $z\in \mathcal{D}\mathcal{M}^\infty_{\rm loc}(\Omega)$ with $||z||_{L^\infty(\Omega)^N}\le 1$ such that
	\begin{align}
		&-\psi^*(u)\operatorname{div}z + g(u) =  h(u)f \ \ \text{as measures in } \Omega, \label{def_distrp=1strong}
		\\
		&\text{where } \psi(s) = \begin{cases}1 \ \ \ &\text{if } h(0)<\infty, \\ \chi_{\{s>0\}} \ \ \ &\text{if } h(0)=\infty, \end{cases} \nonumber
		\\
		&(z,Du)=|Du| \label{def_zp=1strong} \ \ \ \ \text{as measures in } \Omega,
		\\
		&T_k(u^\gamma(x)) + [T_k(u^{\gamma})z,\nu] (x)=0 \label{def_bordop=1strong}\ \ \ \text{for  $\mathcal{H}^{N-1}$-a.e. } x \in \partial\Omega \ \text{and for every} \ k>0.
	\end{align}
\end{defin} 
Hence we state and sketch the proof of the following existence theorem.
\begin{theorem}\label{teo_p>1strong}
	Let $1\le p<N$, let $h$ satisfy \eqref{h1} with $\gamma>1$, \eqref{h2} and suppose that one of the following assumptions hold:
	\begin{itemize}
		\item[i)] $\theta\ge \gamma \text{ and } f\in L^1(\Omega)$;
		
		\item[ii)] $\theta<\gamma,f\in L^{m}(\Omega)$ with $m>1$, and $g$ satisfies \eqref{g3}. 
	\end{itemize}	
	Then there exists a solution $u$ in the sense of Definition \ref{distributional} and \ref{weakdefpositivestrong} to problem \eqref{pb1}. Moreover if $1<p<N$ then $u$ belongs to $W^{1,p}_{\rm loc}(\Omega)$ and $g(u)u^\gamma\in L^1(\Omega)$.
\end{theorem}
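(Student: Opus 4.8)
The plan is to follow the same route as in the proof of Theorem \ref{teo_p>1} — a doubly approximated problem, $n$-uniform (and $p$-uniform) a priori estimates, and then successive passages to the limit in $n$ and in $p$ — the only genuinely new ingredient being that the a priori estimates must now be obtained by testing with a \emph{power} of the approximating solution.

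\emph{A priori estimates.} First I would take $u_n\in W^{1,p}_0(\Omega)\cap L^\infty(\Omega)$ ($p>1$) the nonnegative solution of \eqref{pbpn2} supplied by the scheme of Section \ref{sec:apriori}, and use $u_n^{\gamma}$ as a test function (admissible, since $u_n\in L^\infty(\Omega)$, $h_n$ being bounded). This produces
\[
\gamma\left(\frac{p}{\gamma-1+p}\right)^{p}\int_\Omega\left|\nabla\!\left(u_n^{\frac{\gamma-1+p}{p}}\right)\right|^{p}+\int_\Omega g(u_n)u_n^{\gamma}=\int_\Omega h_n(u_n)f_n u_n^{\gamma}.
\]
Using $h(s)s^{\gamma}\le\underline{C}$ for $s\le\underline{s}$ from \eqref{h1} and $h(s)s^{\gamma}\le\overline{C}s^{\gamma-\theta}$ for $s\ge\overline{s}$ from \eqref{h2}, the right--hand side is bounded by $C\|f\|_{L^1(\Omega)}+\overline{C}\int_{\{u_n>\overline{s}\}}f\,u_n^{\gamma-\theta}$. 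In case i) ($\theta\ge\gamma$) the last integral is $\le\overline{s}^{\gamma-\theta}\|f\|_{L^1(\Omega)}$, which already gives, uniformly in $n$ and $p$, that $u_n^{\frac{\gamma-1+p}{p}}$ is bounded in $W^{1,p}_0(\Omega)$ and $g(u_n)u_n^{\gamma}$ in $L^1(\Omega)$. In case ii) ($\theta<\gamma$) I would apply Young's inequality to $f\,u_n^{\gamma-\theta}$ with exponents $(m,m')$ and absorb the resulting $\int u_n^{(\gamma-\theta)m'}$ into $\nu\int_{\{u_n>s_1\}}u_n^{q+\gamma}\le\int_\Omega g(u_n)u_n^{\gamma}$ provided by \eqref{g3}: the point is that \eqref{g3} is exactly the inequality $(\gamma-\theta)m'\le q+\gamma$ (equality when $q=\frac{\gamma-m\theta}{m-1}$, otherwise closed by one more Young inequality, just as in Lemma \ref{lemmapriori0}). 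The outcome is, uniformly in $n$ and $p$,
\[
\left\|u_n^{\frac{\gamma-1+p}{p}}\right\|^{p}_{W^{1,p}_0(\Omega)}+\|g(u_n)u_n^{\gamma}\|_{L^1(\Omega)}+\|u_n\|^{q+\gamma}_{L^{q+\gamma}(\Omega)}\le C .
\]

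\emph{From the power bound to the truncation and local bounds.} Next I would observe that the requirement $G_k(u)\in W^{1,p}_0(\Omega)$ of Definitions \ref{distributional}--\ref{weakdefpositivestrong} is automatic: since $\int_{\{u_n>k\}}|\nabla u_n|^{p}\le k^{-(\gamma-1)}\int_\Omega u_n^{\gamma-1}|\nabla u_n|^{p}$, the power estimate yields $\|G_k(u_n)\|^{p}_{W^{1,p}_0(\Omega)}\le Ck^{-(\gamma-1)}$ for every $k>0$, uniformly in $n$ and $p$; a Fatou argument on this and on the full weak formulation will also force $\{u=0\}\subset\{f=0\}$ in the limit. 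To reach the \emph{local} finite--energy bound $\int_\omega|\nabla u_n|^{p}\le C_\omega$ for $\omega\subset\subset\Omega$, hence $u\in W^{1,p}_{\rm loc}(\Omega)$, I would combine the same inequality on $\{u_n\le k\}$ with a uniform local lower bound $u_n\ge c_\omega>0$ on $\omega$, since then $\int_\omega|\nabla u_n|^{p}\le c_\omega^{-(\gamma-1)}\int_\Omega u_n^{\gamma-1}|\nabla u_n|^{p}$. Establishing this lower bound — exploiting $h_n(u_n)f_n\ge h_1(u_1)f_1\ge0$ and arguing along the lines of \cite{bo,cst,dgop}, while keeping the nonnegative (and possibly non--monotone) absorption term under control via $g(0)=0$ — is the step I expect to be the main obstacle.

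\emph{Passages to the limit.} With these estimates in hand, for fixed $p>1$ I would extract $u_p$ with $u_n\to u_p$ a.e. and in $L^r_{\rm loc}(\Omega)$, obtain a.e. convergence of $\nabla u_n$ from \cite{bm} applied on compact sets, and pass to the limit in the weak formulation of \eqref{pbpn2} treating the (possibly) singular right--hand side exactly as in the proof of Theorem \ref{teo_p>1} for $p>1$ — splitting into $\{u_n\le\delta\}$ and $\{u_n>\delta\}$, using the cut--offs $V_{\delta,\delta}(u_n)\varphi$ and $S_{\eta,k}(u_n)$, and $\{u_p=0\}\subset\{f=0\}$ — which yields $u_p$ solving \eqref{pb1} in the sense of Definition \ref{distributional}, with $u_p\in W^{1,p}_{\rm loc}(\Omega)$ and $g(u_p)u_p^{\gamma}\in L^1(\Omega)$ (Fatou). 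For $p=1$ I would then let $p\to1$: by Young's inequality and the local estimate $u_p$ is bounded in $BV_{\rm loc}(\Omega)$, while the global power bound gives $u_p^{\frac{\gamma-1+p}{p}}\to u^{\gamma}$ in $L^1(\Omega)$ with $u^{\gamma}\in BV(\Omega)$ (recall $\frac{\gamma-1+p}{p}\to\gamma$); as in Section \ref{sec:p1} the fields $|\nabla u_p|^{p-2}\nabla u_p$ converge weakly in every $L^q(\Omega)^N$ to a $z$ with $\|z\|_{L^\infty(\Omega)^N}\le1$, and I would prove \eqref{def_distrp=1strong}, \eqref{def_zp=1strong} (together with $\chi_{\{u>0\}}\in BV_{\rm loc}(\Omega)$) verbatim as for \eqref{def_distrp=1}, \eqref{def_zp=1}. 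Finally \eqref{def_bordop=1strong} would follow by testing the $u_p$--equation with $T_k(u_p^{\frac{\gamma-1+p}{p}})$, letting $p\to1$, and invoking the Gauss--Green formula of Lemma \ref{green} together with $(z,DT_k(u^{\gamma}))=|DT_k(u^{\gamma})|$ (from \eqref{def_zp=1strong}, Proposition~$4.5$ of \cite{CDC}, and absolute continuity of $|DT_k(u^{\gamma})|$ with respect to $|Du^{\gamma}|$) — exactly the derivation of \eqref{def_bordop=1}, with $u^{\gamma}$, which now carries the global $BV$/energy regularity and the zero trace, in place of $u$.
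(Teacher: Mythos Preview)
Your a priori power estimate (testing with $u_n^\gamma$) and your outline for the passages to the limit match the paper's argument; your derivation of the $G_k(u_n)$ bound directly from the power estimate via $\int_{\{u_n>k\}}|\nabla u_n|^p\le k^{1-\gamma}\int_\Omega u_n^{\gamma-1}|\nabla u_n|^p$ is actually cleaner than the paper's, which tests \eqref{pbpn2} with $G_k(u_n)$ and then invokes the $L^{q+\gamma}$ bound to close.

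The one step where you diverge --- and where you rightly flag ``the main obstacle'' --- is the \emph{local} $W^{1,p}$ bound. Your route requires a uniform local lower bound $u_n\ge c_\omega>0$, to be extracted from an inequality of the type $h_n(u_n)f_n\ge h_1(u_1)f_1$ together with the arguments of \cite{bo,cst,dgop}. But in the generality of the statement neither $h$ nor $g$ is assumed monotone, so there is no comparison principle available to order the sequence $u_n$; the inequality you write is therefore unjustified, and even the local positivity of a fixed $u_1$ would call for a strong maximum principle that, with a possibly non-monotone absorption, you do not have. The paper in fact emphasizes in its introduction that the whole method is designed precisely to avoid any use of the maximum principle, so that $g$ can be ``way more general''.

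The paper sidesteps the lower bound entirely by testing \eqref{pbpn2} with the \emph{non-positive} function $(T_k(u_n)-k)\varphi^p$, where $0\le\varphi\le1$ in $C^1_c(\Omega)$. The singular term $\int_\Omega h_n(u_n)f_n(T_k(u_n)-k)\varphi^p\le0$ is simply discarded, yielding
\[
\int_\Omega|\nabla T_k(u_n)|^p\varphi^p\le pk\int_\Omega|\nabla u_n|^{p-1}|\nabla\varphi|\,\varphi^{p-1}+k\int_\Omega g(u_n)\varphi^p.
\]
Writing $|\nabla u_n|^{p-1}=|\nabla T_k(u_n)|^{p-1}+|\nabla G_k(u_n)|^{p-1}$ a.e., using the global bounds already obtained on $G_k(u_n)\in W^{1,p}_0(\Omega)$ and $g(u_n)\in L^1(\Omega)$, and absorbing the $|\nabla T_k(u_n)|^{p-1}$ contribution via Young, one gets $\|T_k(u_n)\|_{W^{1,p}(\omega)}\le C_\omega$ independently of $n$ and bounded as $p\to1$. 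This single test function replaces your entire lower-bound step and works without any monotonicity on $g$ or $h$.
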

\begin{proof}
	We provide the main hints of the proof, namely how to gain the estimates for the approximate solutions $u_n$ to problem \eqref{pbpn2}. Moreover, as already done in proving Theorem \ref{teo_p>1} we will show that these estimates are independent from both $n$ and $p$. This will allow us to deduce the existence of a solution when $p>1$ and later, by moving $p$, we will get the result for $p=1$.\\ 
	Hence if one takes $u_n^\gamma$ as a test function in the weak formulation of \eqref{pbpn2} then 
	\begin{equation*}\label{stimastrong1}
	\begin{aligned}
		&\left(\frac{p}{\gamma-1+p}\right)^p\gamma\int_\Omega |\nabla u_n^{\frac{\gamma-1+p}{p}}|^p + \int_\Omega g(u_n)u_n^\gamma\le \underline{C}\int_{\{u_n< \underline{s}\}} f + \max_{s\in[\underline{s},\overline{s}]}[h(s)s^\gamma] \int_{\{\underline{s}\le u_n \le \overline{s}\}} f + \overline{C}\int_{\{u_n> \overline{s}\}} f u_n^{\gamma-\theta}.
	\end{aligned}
	\end{equation*}
	In case i) one has that the right hand side of the previous is bounded and the estimate is done. Otherwise in case ii) one applies the Young inequality obtaining
	\begin{equation}\label{stimastrong1bis}
	\begin{aligned}
	\left(\frac{p}{\gamma-1+p}\right)^p\gamma\int_\Omega |\nabla u_n^{\frac{\gamma-1+p}{p}}|^p + &\int_\Omega g(u_n)u_n^\gamma\le \underline{C}\int_{\{u_n< \underline{s}\}} f + \max_{s\in[\underline{s},\overline{s}]}[h(s)s^\gamma] \int_{\{\underline{s}\le u_n \le \overline{s}\}} f + \overline{C}\int_{\{u_n> \overline{s}\}} f u_n^{\gamma-\theta}
	\\
	&\le \left(\underline{C}+\max_{s\in[\underline{s},\overline{s}]}[h(s)s^\gamma]\right) ||f||_{L^1(\Omega)} + C_\varepsilon\int_{\{u_n> \overline{s}\}} f^m + \varepsilon\int_{\{u_n> \overline{s}\}} u_n^\frac{(\gamma-\theta)m}{m-1}.
	\end{aligned}
	\end{equation}		
	Now recalling \eqref{g3} and applying the Young inequality if $q > \frac{\gamma-m\theta}{m-1}$ (if $q = \frac{\gamma-m\theta}{m-1}$ is not necessary) one has (without loss of generality assume that $s_1\le  \overline{s}$)
	\begin{equation*}
\begin{aligned}
&\left(\frac{p}{\gamma-1+p}\right)^p\gamma\int_\Omega |\nabla u_n^{\frac{\gamma-1+p}{p}}|^p + \nu\int_{\{u_n>s_1\}} u_n^{q+ \gamma}
\\
&\le \left(\underline{C}+\max_{s\in[\underline{s},\overline{s}]}[h(s)s^\gamma]\right) ||f||_{L^1(\Omega)} + C_\varepsilon||f||^m_{L^m(\Omega)} + \varepsilon \int_{\{u_n> s_1\}}u_n^{q+\gamma} + C|\Omega|,
\end{aligned}
\end{equation*}	
 and, fixing $\varepsilon$ small enough, one has that $u_n$ is bounded in $L^{q+\gamma}(\Omega)$ with respect to $n$ and $p$. Using this information in \eqref{stimastrong1bis} one has that in both cases i) and ii) the following holds
 	\begin{equation}\label{stimastrong2}
 \begin{aligned}
 &\int_\Omega |\nabla u_n^{\frac{\gamma-1+p}{p}}|^p + \int_\Omega g(u_n)u_n^\gamma\le C,
 \end{aligned}
 \end{equation}
 for some constant $C$ not dependent on $n$ and $p$.
 Now we take $G_k(u_n)$ ($k>0$) as a test function in the weak formulation of \eqref{pbpn2} and if one gets rid of the absorption term then
  \begin{equation*}
 \begin{aligned}
\int_\Omega |\nabla G_k(u_n)|^p \le \int_{\{u_n\le\underline{s}\}}h(u_n)fG_k(u_n) + \overline{C}\int_{\{u_n>\overline{s}\}}fG_k(u_n)^{1-\theta}.
 \end{aligned}
 \end{equation*}
 Now if $\theta\ge 1$ it is simple to show that the right hand side is bounded by a positive constant independent of $n$ and $p$. Otherwise if $\theta<1$ it follows from the Young inequality that  
 \begin{equation*}
 \begin{aligned}
 \int_\Omega |\nabla G_k(u_n)|^p &\le \int_{\{u_n\le\underline{s}\}}h(u_n)fG_k(u_n) + \overline{C}\int_{\{u_n>\overline{s}\}}fG_k(u_n)^{1-\theta}
 \\
 &\le \max_{s\in[k, \overline{s}]}[h(s)s] ||f||_{L^1(\Omega)} + \overline{C}\int_{\Omega}f^m + \overline{C}\int_\Omega u_n^\frac{(1-\theta)m}{m-1}\le C 
 \end{aligned}
 \end{equation*}
 since $u_n$ is bounded in $L^{q+\gamma}(\Omega)$. In all cases one has that $||G_k(u_n)||^p_{W^{1,p}_0(\Omega)} \le C$ where $C$ does not depend on $n$ and $p$.
 \\ \\Now let us consider $\varphi\in C^1_c(\Omega)$ such that $0\le \varphi \le 1$ and let us take the non-positive $(T_k(u_n)-k)\varphi^p$ as a test function in the weak formulation of \eqref{pbpn2}. Hence one has 
 \begin{equation*}
 	\int_{\Omega} |\nabla T_k(u_n)|^{p}\varphi^p + 	p\int_{\Omega} |\nabla u_n|^{p-2}\nabla u_n\cdot \nabla \varphi (T_k(u_n)-k)\varphi^{p-1} + \int_{\Omega} g(u_n)(T_k(u_n)-k)\varphi^p = \int_\Omega h_n(u_n)f_n(T_k(u_n)-k)\varphi^p \le 0,
 \end{equation*}
 which implies that
  \begin{equation*}\label{stimalocgamma>1}
 \int_{\Omega} |\nabla T_k(u_n)|^{p}\varphi^p \le 	pk\int_{\Omega} |\nabla T_k(u_n)|^{p-1}|\nabla \varphi| \varphi^{p-1} +pk\int_{\Omega} |\nabla G_k(u_n)|^{p-1}|\nabla \varphi| \varphi^{p-1} + k\int_{\Omega} g(u_n).
 \end{equation*}
 Finally, observing that \eqref{stimastrong2} implies that $g(u_n)$ is bounded in $L^1(\Omega)$ and recalling that $G_k(u_n)$ is bounded in $W^{1,p}_0(\Omega)$, it follows from the Young inequality that
   \begin{equation*}
 \int_{\Omega} |\nabla T_k(u_n)|^{p}\varphi^p \le  	pk\varepsilon \int_{\Omega} |\nabla T_k(u_n)|^{p}\varphi^{p} + pk C_\varepsilon\int_{\Omega} |\nabla \varphi|^p + C,
 \end{equation*}
 where $C$ does not depend on $n$ and $p$. Therefore, fixing $\varepsilon$ small enough one has that
 \begin{equation*}
 	||T_k(u_n)||_{W^{1,p}(\omega)}\le C, \ \ \ \forall \omega\subset\subset \Omega,
 \end{equation*}
 for a positive constant $C$ independent on $n$ and which is bounded as $p\to 1$. This gives that $u_n$ is locally bounded in $W^{1,p}(\Omega)$ and then, repeating the arguments of Theorem \ref{teo_p>1} with the adjustment of getting test functions in $W^{1,p}_0(\Omega)\cap L^\infty(\Omega)$ having compact support, one is able to prove that there exists $u_p\in W^{1,p}_{\rm loc}(\Omega)$ such that
 \begin{equation}\label{testgamma>1}
 	\int_{\Omega}|\nabla u_p|^{p-2}\nabla u_p\cdot \nabla \varphi + \int_\Omega g(u_p)\varphi = \int_\Omega h(u_p)f\varphi, 
 \end{equation}
 for all $\varphi \in W^{1,p}_0(\Omega)\cap L^\infty(\Omega)$ having compact support in $\Omega$.\\
 Now that we have a solution $u_p$ to problem \eqref{pb1} in case $p>1$ we need to move $p\to 1$ as already done in the proof of Theorem \ref{teo_p>1}, which we will retrace highlighting the main differences. Since $u_p$ is locally bounded in $W^{1,p}(\Omega)$ and reasoning as in \eqref{bv} one gets that $u_p$ is locally bounded in $BV(\Omega)$. Moreover an analogous standard diagonal arguments takes to a vector field $z$ with $||z||_{L^\infty(\Omega)^N}\le 1$. Moreover from \eqref{stimastrong2} one also has that $u_p^{\frac{\gamma-1+p}{p}}$ is bounded in $BV(\Omega)$ with respect to $p$. Hence there exists $u\in BV_{\rm loc}(\Omega)$ such that $u^\gamma \in BV(\Omega)$ with $u_p^{\frac{\gamma-1+p}{p}}$ converging to $u^\gamma$ in $L^r(\Omega)$ with $r<\frac{N}{N-1}$ and $\nabla u_p^{\frac{\gamma-1+p}{p}}$ converging $*$-weakly as measures to $Du^\gamma$. The distributional formulation \eqref{def_distrp=1strong} and the fact that $z\in \DM_{\rm loc}(\Omega)$ can be proved as in proof of Theorem \ref{teo_p>1}. For what concerns the \eqref{def_zp=1strong} one can reason similarly to what done in \cite{dgop}. Here we highlight a sketch of the proof.  We take $T_k^\frac{\gamma-1+p}{p}(u_p)\varphi$ with $0\le \varphi\in C^1_c(\Omega)$ as a test function in the weak formulation of \eqref{testgamma>1} and we apply the Young inequality, obtaining
 \begin{equation}\label{stimak>1}	
 \begin{aligned}
 &\left(\frac{\gamma-1+p}{p}\right)^\frac{1}{p}\left(\frac{p^2}{\gamma-1+p^2}\right)\int_{\Omega} |\nabla T_k^{\frac{\gamma-1+p^2}{p^2}}(u_p)|\varphi + \int_{\Omega} T_k^\frac{\gamma-1+p}{p}(u_p)|\nabla u_p|^{p-2}\nabla u_p \cdot \nabla \varphi 
 \\
 &+\int_{\Omega} g(u_p) T_k^\frac{\gamma-1+p}{p}(u_p)\varphi\le \int_{\Omega}  h(u_p)f u_p^\frac{\gamma-1+p}{p} \varphi  + \frac{p-1}{p}\int_{\Omega}\varphi.	
 \end{aligned}
 \end{equation}
 We observe that the second term on the left hand side is bounded with respect to $k$ and the third term is nonnegative. The right hand side is finite, indeed for the first term one has 
 $$\int_{\Omega}  h(u_p)f u_p^\frac{\gamma-1+p}{p} \varphi \le \underline{s}^\frac{\gamma-1+p}{p} \int_{\{u_p<\underline{s}\}}  h(u_p)f\varphi + \max_{s\in [\underline{s}, \overline{s}]} [h(s)s^\frac{\gamma-1+p}{p}]\int_{\{\underline{s} \le u_p \le \overline{s}\}}  f  \varphi + \overline{C}\int_{\{u_p>\overline{s}\}}  f u_p^\frac{\gamma-1+p - p\theta}{p} \varphi,$$
 and, since by \eqref{stimastrong2} one has that $u_p$ is bounded in $L^{q+\gamma}(\Omega)$, then  $u_p^\frac{\gamma-1+p - p\theta}{p}$ belongs to $L^{\frac{m}{m-1}}(\Omega)$ which gives that $ h(u_p)f u_p^\frac{\gamma-1+p}{p} \varphi\in L^1(\Omega)$ (recall that $h(u_p)f\in L^1_{\rm loc}(\Omega)$). Hence one can simply take $k\to \infty$ in \eqref{stimak>1}, applying the Fatou Lemma for the third term on the left hand side, yielding to
 \begin{equation}\label{stimak>2}	
 \begin{aligned}
 &\left(\frac{\gamma-1+p}{p}\right)^\frac{1}{p}\left(\frac{p^2}{\gamma-1+p^2}\right)\int_{\Omega} |\nabla u_p^{\frac{\gamma-1+p^2}{p^2}}|\varphi + \int_{\Omega} u_p^\frac{\gamma-1+p}{p}|\nabla u_p|^{p-2}\nabla u_p \cdot \nabla \varphi 
 \\
 &+\int_{\Omega} g(u_p) u_p^\frac{\gamma-1+p}{p}\varphi \le \int_{\Omega}  h(u_p)f u_p^\frac{\gamma-1+p}{p} \varphi  + \frac{p-1}{p}\int_{\Omega}\varphi.	
 \end{aligned}
 \end{equation}
 Now for the first term on the right hand side of the previous we consider $\delta>\overline{s}: \ \delta\not\in\{\eta: |\{u=\eta \}|>0\}$ and one has that
 $$\int_{\Omega}  h(u_p)f u_p^\frac{\gamma-1+p}{p} \varphi = \int_{\{u_p\le \delta\}}  h(u_p)f u_p^\frac{\gamma-1+p}{p} \varphi + \int_{\{u_p> \delta\}}  h(u_p)f u_p^\frac{\gamma-1+p}{p}  \varphi.$$
 We can pass to the limit first in $p\to 1$ and then as $\delta \to 0$ in the first term of the right hand side of the previous deducing that it goes to zero. For the second term, since $h(u_p)u_p^\frac{\gamma-1+p}{p}\chi_{\{u> \delta\}}$ is bounded in $L^\frac{m}{m-1}(\Omega)$, one can pass to the limit in $p$ by weak convergence and in $\delta$  by the Lebesgue Theorem. Hence one can take $p\to 1$ in \eqref{stimak>2} deducing
  \begin{equation}\label{stimak>3}	
 \begin{aligned}
 \int_{\Omega} \varphi|D u^{\gamma}| &+ \int_{\Omega} u^\gamma z \cdot \nabla \varphi + \int_\Omega g(u)u^\gamma \le \int_{\Omega}  h(u)f u^\gamma \varphi.	
 \end{aligned}
 \end{equation}
 Now as in Theorem \ref{teo_p>1} one can show that (observe that by Lemma \ref{lempairing} one has $(u^\gamma)^* \in L^1_{\rm loc}(\Omega,\operatorname{div}z)$)
 \begin{equation}\label{peru>1}
 -(u^\gamma)^* \operatorname{div}z +g(u)u^\gamma = h(u)fu^\gamma \ \text{ as measures in $\Omega$, }
 \end{equation}
 which, coupled with \eqref{stimak>3}, gives that 
 \begin{equation*}
 \int_{\Omega} \varphi|D u^{\gamma}| \le - \int_{\Omega} u^\gamma z \cdot \nabla \varphi -\int_{\Omega}(u^{\gamma})^* \varphi\operatorname{div}z = \int_{\Omega}\varphi (z, D u^\gamma),  \ \ \ \forall \varphi\in C^1_c(\Omega), \ \ \varphi \ge 0.	
\end{equation*}
Therefore, since the reverse inequality is trivial, one has
 \begin{equation*}\label{z>1}
(z, D u^\gamma)=|D u^{\gamma}|  \ \ \ \text{as measures in } \Omega.
 \end{equation*}
 At this point we apply Proposition $4.5$ of \cite{CDC} which gives
 \begin{equation*}
 \lambda(z,Du,x)=\lambda(z,Du^\gamma,x)  \qquad \text{for \(|Du|\)-a.e.}\ x\in\Omega, 
 \end{equation*}
 where \(\lambda(z, Du, \cdot)\) denotes the Radon-Nikodym derivative of $(z, Du)$ with respect to $|Du|$ and \(\lambda(z, Du^\lambda, \cdot)\) denotes the Radon-Nikodym derivative of $(z, Du^\gamma)$ with respect to $|Du^\gamma|$. This gives \eqref{def_zp=1strong}. Finally in order to deduce \eqref{def_bordop=1strong} we take $T^\gamma_k(u_n)$ as a test function in the weak formulation of \eqref{pbpn2} and applying the Young inequality one deduces
 \begin{equation*}
 \gamma^{\frac{1}{p}}\left(\frac{p}{\gamma-1+p}\right)\int_\Omega |\nabla T_k^{\frac{\gamma-1+p}{p}}(u_n)|+ \int_{\partial \Omega}T^\frac{\gamma-1+p}{p}_k(u_n) d\mathcal{H}^{N-1} + \int_\Omega g(u_n)T^\gamma_k(u_n) \le \int_\Omega h_n(u_n)f_nT^\gamma_k(u_n) + \frac{p-1}{p}|\Omega|,
 \end{equation*}
 and simply taking first $n\to \infty$ and then $p\to 1$ one gets
  \begin{equation}\label{bordo>1}
 \int_\Omega |D T_k^{\gamma}(u)| + \int_{\partial \Omega}T_k^\gamma(u) d\mathcal{H}^{N-1} + \int_\Omega g(u)T^\gamma_k(u) \le \int_\Omega h(u)fT^\gamma_k(u).
 \end{equation}
 Now we observe that, reasoning as to obtain \eqref{peru>1}, one can deduce that
  \begin{equation*}
 -(T^\gamma_k(u))^* \operatorname{div}z +g(u)T^\gamma_k(u) = h(u)fT^\gamma_k(u) \ \text{ as measures in $\Omega$, }
 \end{equation*}
 which gathered in \eqref{bordo>1} takes to 
   \begin{equation}\label{bordo2}
 \int_\Omega |D T_k^{\gamma}(u)| + \int_{\partial \Omega}T^\gamma_k(u) d\mathcal{H}^{N-1} \le -\int_\Omega (T^\gamma_k(u))^* \operatorname{div}z= \int_{\Omega} (z,DT_k^\gamma(u)) - \int_{\partial \Omega} [T_k^\gamma(u)z,\nu]d\mathcal{H}^{N-1}.
 \end{equation}
 Now reasoning as to prove \eqref{troncata} one can deduce  $(z, D T_k^\gamma(u))=|D T_k^{\gamma}(u)|$  as measures in $\Omega$, which used in \eqref{bordo2} gives \eqref{def_bordop=1strong}.
 This concludes the proof.	   
\end{proof}

\end{document}